\documentclass[10pt]{amsart}

\usepackage{amsfonts,amssymb,amsmath,amsthm,amscd,enumerate}
\usepackage{latexsym}
\usepackage{euscript}
\usepackage[english]{babel}
\usepackage[latin1]{inputenc}

\newtheorem{theorem}{Theorem}[section]
\newtheorem{definition}[theorem]{Definition}
\newtheorem{lemma}[theorem]{Lemma}
\newtheorem{corollary}[theorem]{Corollary}
\newtheorem{proposition}[theorem]{Proposition}
\newtheorem{Remark}[theorem]{Remark}
\newtheorem{example}{Example}

\title[$sl_{2}$ over a finite field]{On $\mathbb{Z}_{2}$-graded polynomial identities of $sl_{2}(F)$ over a finite field}
\author{Luís Felipe Gonçalves Fonseca}
\address{Departamento de Matemática, Universidade Federal
de Viçosa - Campus Florestal, Rodovia LMG 818, km 06, Florestal, MG,
Brazil} \email{luisfelipe@ufv.br}

\begin{document}
\maketitle

\begin{abstract}

Let $F$ be a finite field of $char F > 3$ and $sl_{2}(F)$ be the Lie
algebra of traceless $2\times 2$ matrices over $F$. In this paper,
we find a basis for the $\mathbb{Z}_{2}$-graded identities of
$sl_{2}(F)$.

\vspace{0.5cm}

Keywords: 16R10,17B01,15A72,17B70.

Mathematics Subject Classification 2010: Graded identities; Lie
algebras; finite basis identities.

\end{abstract}

\smallskip

%%%%%%%%%%%%%%%%%%%%%%%%%%%%%%%%%%%%%%%%%%%%%%%%%%%%%%%%%%%%%%%%%%%%%%%%%%%%%%%%%%%%
%%%%%%%%%%%%%%%%%%%%%%%%%%%%%%%%%%%%%%%%%%%%%%%%%%%%%%%%%%%%%%%%%%%%%%%%%%%%%%%%%%%%

%%% Introdução %%% Introdução %%% Introdução %%% Introdução %%% Introdução

%%%%%%%%%%%%%%%%%%%%%%%%%%%%%%%%%%%%%%%%%%%%%%%%%%%%%%%%%%%%%%%%%%%%%%%%%%%%%%%%%%%%
%%%%%%%%%%%%%%%%%%%%%%%%%%%%%%%%%%%%%%%%%%%%%%%%%%%%%%%%%%%%%%%%%%%%%%%%%%%%%%%%%%%%

\section{Introduction}

The well-known Ado-Iwasawa' theorem posits that any
finite-dimensional Lie algebra over an arbitrary field has a
faithful finite-dimensional representation. Briefly, any finite
dimensional Lie algebra can be viewed as a subalgebra of a Lie
algebra of square matrices under the commutator brackets. Thus, the
study of Lie algebras of matrices is of considerable interest.

A task in PI-theory is to describe the identities of $sl_{2}(F)$,
the Lie algebra of traceless $2\times 2$ matrices over a field $F$
of $char F \neq 2$. The first breakthrough in this area was made by
Razmyslov \cite{Razmyslov}, who described a basis for the identities
of $sl_{2}(F)$ when $char F = 0$. Vasilovsky \cite{Vasilovsky} found
a single identity for the identities of $sl_{2}(F)$ when $F$ is an
infinite field of $char F \neq 2$, and Semenov \cite{Semenov}
described a basis (with two identities) for the identities of
$sl_{2}(F)$ when $F$ is a finite field of $char F
> 3$.

The Lie algebra $sl_{2}(F)$ can be naturally graded by
$\mathbb{Z}_{2}$ as follows: $sl_{2}(F) =
\newline (sl_{2}(F))_{0}\oplus (sl_{2}(F))_{1}$ where $(sl_{2}(F))_{0},
(sl_{2}(F))_{1}$ contain diagonal and off-diagonal matrices
respectively. A recent development in PI-theory is the description
of the graded identities of $sl_{2}(F)$. Using invariant theory
techniques, Koshlukov \cite{Plamen1} described the
$\mathbb{Z}_{2}$-graded identities for $sl_{2}(F)$ when $F$ is an
infinite field of $char F \neq 2$. Several further papers on graded
identities of $sl_{2}(F)$ have appeared in recent years (cf. e.g.,
\cite{Giambruno} and \cite{Giambruno2}). In these studies, the
ground field is of characteristic zero.

To date, no basis has been found for the $\mathbb{Z}_{2}$-graded
identities of $sl_{2}(F)$ when $F$ is a finite field.

In this paper we give a basis for the graded identities $sl_{2}(F)$
when $F$ is a finite field of $char F
> 3$.

\section{Preliminaries}

%%%% PRIMEIRA CONVERSA %%%%%%
%%%%%%%%%%%%%%%%%%%%%%%%%%%%%

Let $F$ be a fixed finite field of $char F > 3$ and size $|F| = q$,
let $\mathbb{N}_{0} = \{1,2,\ldots,\newline, n,\ldots\}$, let $G =
(\mathbb{Z}_{2},+)$, and let $L$ be a Lie algebra over $F$. In this
study (unless otherwise stated), all vector spaces and Lie algebras
are considered over $F$. The $+^{\cdot},\oplus,
span_{F}\{a_{1},\ldots,a_{n}\}, \langle a_{1},\ldots,a_{n} \rangle,
(a_{1},\ldots,a_{n} \in L)$ signs denote the direct sum of Lie
algebras, the direct sum of vector spaces, the vector space
generated by $a_{1},\ldots,a_{n}$, and the ideal generated by
$a_{1},\ldots,a_{n}$ respectively, while an associative product is
represented by a dot: $``.''$. The commutator $([,])$ denotes the
multiplication operation of a Lie algebra. We assume that all
commutators are left-normed, i.e., $[x_{1},x_{2},\ldots,x_{n}]:=
[[x_{1},x_{2},\ldots,x_{n-1}],x_{n}] \ \ n \geq 3 $. We use the
convention $[x_{1},x_{2}^{k}] = [x_{1},x_{2},\ldots,x_{2}]$, where
$x_{2}$ appears $k$ times in the expanded commutator.

%%%% sl2 %%%%%%%%%%%%%%%%%%%%
%%%% gl2 %%%%%%%%%%%%%%%%%%%%

We denote by $gl_{2}(F)$ the Lie algebra of $2 \times 2$ matrices
over $F$. Let $sl_{2}(F)$ denote the Lie algebra of traceless
$2\times 2$ matrices over $F$. Here, $e_{ij} \subset gl_{2}(F)$
denotes the unitary matrix unit whose elements are $1$ in the
positions $(ij)$ and $0$ otherwise.

%%% metabelian and abelian %%%%%
%%%%%%%%%%%%%%%%%%%%%%%%%%%%%%%%

The basic concepts of Lie algebra adopted in this study can be found
in Chapters 1 and 2 of \cite{Humphreys}. We denote the center of $L$
by $Z(L)$. If $x \in L$, we denote by $ad x$ the linear map with the
function rule: $y \mapsto [x,y]$. $L$ is said to be metabelian if it
is solvable of class at most $2$. As is known, if $L$ ($L$ over a
finite field of $char F
> 3$) is a three-dimensional simple Lie algebra, then $L \cong
sl_{2}(F)$. $L$ is regarded as a Lie $A$-algebra if all of its
nilpotent subalgebras are abelian.

%%%% Graded Lie algebra %%%%%% PAREI AQUI/ PAREI AQUI
%%%%%%%%%%%%%%%%%%%%%%%%%%%%%%

A Lie algebra $L$ is said to be $G$-graded (a graded Lie algebra or
graded by $G$) when there exist subspaces $\{L_{g}\}_{g \in G}
\subset L$ such that $L = \bigoplus_{g \in G}L_{g}$, and
$[L_{g},L_{h}] \subset L_{g+h}$ for any $g,h \in G$. $G$-graded
associative algebras are defined in the same way. In that context,
$\{L_{g}\}_{g \in G}$ is said to be a grading for $L$. An element
$a$ is called homogeneous when $a \in \bigcup_{g \in G} L_{g}$. We
say that $a \neq 0$ is a homogeneous element of $G$-degree $g$ when
$a \in L_{g}$. A $G$-graded homomorphism of two $G$-graded Lie
algebras $L_{1}$ and $L_{2}$ is a homomorphism $\phi: L_{1}
\rightarrow L_{2}$ such that $\phi({L_{1}}_{g}) \subset {L_{2}}_{g}$
for all $g \in G$. Two gradings on $L$ $\{L_{g}\}_{g \in G}$ and
$\{L'_{g}\}_{g \in G}$ on $L$ are called isomorphic when there
exists a $G$-graded isomorphism $\phi: L \rightarrow L$ such that
$\phi(L_{g}) = L'_{g}$ for all $g \in G$. An ideal $I \subset L$ is
graded when $I = \bigoplus_{g \in G}(I\cap L_{g})$ (we define graded
Lie subalgebras similarly). Likewise, if $I$ is a graded ideal of
$L$, $C_{L}(I) = \{a \in L | [a,I] = \{0\}\}$ is also a graded ideal
of $L$. Furthermore, $Z(L), L^{n}$ (the $n$-th term of descending
central series), and $L^{(n)}$ (the $n$-th term of derived series)
are graded ideals of $L$. We use the convention that $L^{(1)} =
[L,L]$ and $L^{1} = L$.

%%% Nilradical e radical soluvel %%%%
%%%%%%%%%%%%%%%%%%%%%%%%%%%%%%%%%%%%%

Let $L$ be a finite-dimensional Lie algebra. Denote by $Nil(L)$ the
greatest nilpotent ideal of $L$ and by $Rad(L)$ the greatest
solvable ideal of $L$. Clearly, $Nil(L)$ is the unique maximal
abelian ideal of $L$ when $L$ is a Lie $A$-algebra. Furthermore,
every subalgebra and every factor algebra of $L$ is a Lie
$A$-algebra when $L$ is also a Lie $A$-algebra (see Lemma 2.1 in
\cite{Towers} and Lemma 1 in \cite{Semenov2}).

%%%% TOWERS %%%%%%%%%%%%%%%%%%%%%%%%
%%%%%%%%%%%%%%%%%%%%%%%%%%%%%%%%%%%%

The next theorem is a structural result on solvable Lie
$A$-algebras.

\begin{theorem}[Towers, Theorem 3.5, \cite{Towers}]\label{david}
Let $L$ be a (finite-dimensional) solvable Lie $A$-algebra (over an
arbitrary field $F$) of derived length $n + 1$ with nilradical
$Nil(L)$. Moreover, let $K$ be an ideal of $L$ and $B$ a minimal
ideal of $L$. Then we have the following:
\begin{description}
\item $K = (K\cap A_{n}) \oplus (K\cap A_{n-1}) \oplus \ldots \oplus (K\cap
A_{0})$;
\item $Nil(L) = A_{n} +^{\cdot} (A_{n-1}\cap Nil(L)) +^{\cdot} \ldots +^{\cdot} (A_{0}\cap
Nil(L))$;
\item $Z(L^{(i)}) = Nil(L)\cap A_{i}$ for each $0 \leq i \leq n$;
\item $B \subseteqq Nil(L)\cap A_{i}$ for some $0 \leq i \leq n$.
\end{description}
$A_{n} = L^{(n)}$, $A_{n-1},\ldots,A_{0}$ are abelian subalgebras of
$L$ defined in the proof of Corollary 3.2 in \cite{Towers}.
\end{theorem}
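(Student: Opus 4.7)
The plan is to proceed by induction on the derived length $n+1$ of $L$. The base case $n=0$ is trivial: $L$ is abelian, one sets $A_0 = L$, and all four conclusions hold immediately.

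For the inductive step, set $A_n := L^{(n)}$; this is an abelian ideal since $L^{(n+1)} = [L^{(n)},L^{(n)}] = 0$, and in particular $A_n \subseteq Nil(L)$. Pass to $\bar{L} = L/A_n$, which is a solvable Lie $A$-algebra of derived length $n$ by the factor-algebra stability of the $A$-property noted in the preliminaries. The inductive hypothesis yields abelian subalgebras $\bar{A}_{n-1}, \ldots, \bar{A}_0$ of $\bar{L}$ satisfying the four statements relative to $\bar{L}$. The principal technical task is then to lift these $\bar{A}_i$ to abelian subalgebras $A_i \subseteq L$ producing a vector space decomposition $L = A_n \oplus A_{n-1} \oplus \cdots \oplus A_0$. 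Here the $A$-property is indispensable: every Cartan subalgebra of $L$ is nilpotent, hence abelian, which provides a Mal'cev-style splitting $L = Nil(L) + H$ against the nilradical that remains available in positive characteristic. Concretely, one picks a Cartan subalgebra $H$ of $L$, lifts the decomposition of $\bar{L}$ through the surjection $L \to \bar{L}$, and adjusts the lifts to honest subalgebras using that $A_n$ is central in $L^{(n-1)}$ and that Cartan subalgebras in solvable Lie $A$-algebras are conjugate.

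Once the decomposition is in hand, the four items are largely bookkeeping. Item (1) follows by applying the inductive conclusion to $\bar{K} = (K + A_n)/A_n$ and combining with $K \cap A_n$. Item (2) is immediate from $Nil(L)$ being abelian and containing $A_n$: the remaining contribution is $\bigoplus_{i<n}(A_i \cap Nil(L))$, and the abelianness of $Nil(L)$ promotes this vector-space sum to a direct sum of Lie algebras. Item (3) is obtained by identifying $L^{(i)}$ with the span of $A_i, \ldots, A_n$ modulo commutator terms and computing the center directly. Item (4) reduces to the observation that a minimal ideal $B$ must be abelian (otherwise $[B,B]$ would be a smaller nonzero ideal), so $B \subseteq Nil(L)$; minimality combined with the direct-sum decomposition of $Nil(L)$ into $L$-invariant pieces, each $A_i \cap Nil(L)$ being invariant through its identification with $Z(L^{(i)})$ from (3), forces $B$ to lie in a single summand.

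The main obstacle is the splitting step: producing the $A_i$ as genuine subalgebras, not merely vector-space complements, and verifying they are abelian. In characteristic zero this could be handled by Levi-Mal'cev-type arguments, but in positive characteristic one must exploit the abelianness of Cartan subalgebras in Lie $A$-algebras together with conjugacy of Cartans in the solvable setting to construct the nested Cartan-like lifts level by level in the derived series.
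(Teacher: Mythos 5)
The paper does not actually prove this statement: it is imported verbatim from Towers (Theorem 3.5 of \cite{Towers}), and even the subalgebras $A_n,\ldots,A_0$ are defined only by reference to the proof of Corollary 3.2 there. So the comparison has to be made against Towers' published argument, and measured against that your sketch has genuine gaps rather than being a proof taking a different route.

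The heart of the matter --- constructing abelian subalgebras $A_i$ with $L = A_n \oplus A_{n-1} \oplus \cdots \oplus A_0$ as vector spaces and $L^{(i)} = A_n +^{\cdot} \cdots +^{\cdot} A_i$ --- is precisely the content of Towers' Theorem 3.1/Corollary 3.2, and you do not establish it: you assert that the lifts from $L/A_n$ can be ``adjusted to honest subalgebras'' by invoking conjugacy of Cartan subalgebras in solvable Lie $A$-algebras. The theorem is stated over an arbitrary field, and the application in this paper is in characteristic $p>3$; in positive characteristic Cartan subalgebras of a solvable Lie algebra need not be conjugate (they need not even have the same dimension), and you give no argument that the $A$-property restores conjugacy. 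Towers' construction avoids this entirely: he proves directly that a solvable $A$-algebra splits over each term of its derived series, using $A$-algebra-specific lemmas about the (abelian) nilradical, minimal ideals and complemented chief factors, and builds the $A_i$ inductively from those splittings; nothing in your sketch reproduces or replaces that step. Two of the ``bookkeeping'' items also need more than you say. For item (1), knowing the statement for $\bar K = (K+A_n)/A_n$ does not immediately give it for $K$: one must show $K \cap A_i$ actually maps onto $\bar K \cap \bar A_i$, which is part of what Towers proves. For item (4), from $B \subseteq Nil(L) = \bigoplus_i (Nil(L)\cap A_i)$ with each summand an ideal you cannot conclude that $B$ lies in a single summand --- a minimal ideal can sit diagonally across two isomorphic summands --- so an additional argument (e.g.\ via the identification $Nil(L)\cap A_i = Z(L^{(i)})$ and a maximality choice of $i$ with $B \subseteq L^{(i)}$) is required. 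Items (2) and (3) would indeed follow along the lines you indicate once the decomposition and item (1) are in hand.
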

\begin{Remark}
Assuming Theorem \ref{david}, we can prove that, if $L =
\bigoplus_{g \in G}L_{g}$ is a (finite-dimensional) solvable graded
Lie $A$-algebra (over an arbitrary field $F$) of derived length $n +
1$ with nilradical $Nil(L)$ , then $Nil(L)$ is a graded ideal.
Moreover, if $L$ is finite-dimensional metabelian Lie $A$-algebra
(over an arbitrary field), then $Nil(L) = [L,L] +^{\cdot} Z(L)$.
\end{Remark}

%%%% Premet e Semenov %%%%%%%%%%%%%%%%%%%%%%%%%%%%%%%%%%%%%%%%%
%%%%%%%%%%%%%%%%%%%%%%%%%%%%%%%%%%%%%%%%%%%%%%%%%%%%%%%%%%%%%%%
%%%%%%%%%%%%%%%%%%%%%%%%%%%%%%%%%%%%%%%%%%%%%%%%%%%%%%%%%%%%%%%

A finite-dimensional Lie algebra $L$ is called semisimple if $Rad(L)
= \{0\}$. Recall that $L$ (finite-dimensional and non solvable) has
a Levi decomposition when there exist a semisimple subalgebra $S
\neq \{0\}$ (termed a Levi subalgebra) such that $L$ is a semidirect
product of $S$ and $Rad(L)$. We now present a result.

\begin{proposition}[Premet and Semenov, Proposition 2, adapted,
\cite{Semenov2}]\label{premet} Let $L$ be a finite-dimensional Lie
$A$-algebra over a finite field $F$ of $char F > 3$. Then,
\begin{description}
\item $[L,L]\cap Z(L) = \{0\}$.
\item $L$ has a Levi decomposition. Moreover, each Levi subalgebra
$S$ is represented as a direct sum of $F$-simple ideals in $S$, each
one of which splits over some finite extension of the ground field
into a direct sum of the ideals isomorphic to $sl_{2}(F)$.
\end{description}
\end{proposition}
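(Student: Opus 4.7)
My plan is to essentially transcribe the Premet--Semenov argument of Proposition~2 in \cite{Semenov2}, since the statement is explicitly attributed to them up to adaptation. Specializing their hypothesis (characteristic $>3$) to a finite ground field does not alter any step, so the main task is to extract and reorganize exactly these two conclusions.

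I would treat the second bullet first, as the first will follow from the Levi decomposition together with a short module-theoretic argument. The existence of a Levi decomposition in characteristic $p > 3$ is nontrivial: one shows, as in \cite{Semenov2}, that the obstructing second Chevalley--Eilenberg cohomology group vanishes for Lie $A$-algebras, ultimately reducing to the classical vanishing $H^{2}(sl_{2},V)=0$ for finite-dimensional $sl_{2}$-modules in characteristic $>3$. Once a Levi subalgebra $S$ is available, $S$ is itself a Lie $A$-algebra (as a subalgebra of $L$), so standard semisimple theory decomposes it into $F$-simple ideals $S = \bigoplus_i S_i$. To see that each $S_i$ splits into copies of $sl_{2}(F)$ over a finite extension of $F$, I would invoke the classification of simple Lie algebras in characteristic $p>3$: no Cartan-type simple algebra is a Lie $A$-algebra (each contains a nonabelian nilpotent subalgebra), while among classical simple algebras only $sl_{2}$ is an $A$-algebra, since $sl_{n}$ for $n\geq 3$ already contains the nonabelian nilpotent subalgebra of strictly upper-triangular matrices, with analogous obstructions in types $B$, $C$, $D$ and the exceptional types.

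For the first bullet, I would argue by contradiction: suppose $0 \neq z \in [L,L]\cap Z(L)$. Since $Z(L)$ is an abelian ideal, $z \in Z(L)\subseteq \mathrm{Nil}(L)\subseteq \mathrm{Rad}(L)$. Using the Levi decomposition $L = S \ltimes \mathrm{Rad}(L)$ just established, the $S$-invariance of $z$ combined with complete reducibility of $\mathrm{Rad}(L)$ as an $S$-module (afforded by each simple component of $S$ being of type $sl_{2}$ in characteristic $>3$) places $z$ in the trivial $S$-isotypic component of $\mathrm{Rad}(L)$, which meets $[S,\mathrm{Rad}(L)]$ trivially. Since $[L,L]\cap\mathrm{Rad}(L) = [S,\mathrm{Rad}(L)] + [\mathrm{Rad}(L),\mathrm{Rad}(L)]$, the remaining possibility is $z \in [\mathrm{Rad}(L),\mathrm{Rad}(L)]$; but the structural description of Theorem~\ref{david} applied to the solvable Lie $A$-algebra $\mathrm{Rad}(L)$ forces $[\mathrm{Rad}(L),\mathrm{Rad}(L)]\cap Z(\mathrm{Rad}(L)) = \{0\}$ (the derived part lies in $A_n$ while the central part lies in the complementary summands), completing the contradiction.

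The principal obstacle is the cohomological vanishing underpinning the Levi decomposition in positive characteristic; this is the longest part of the argument but is already handled in \cite{Semenov2}, so the adaptation to the finite-field setting amounts mostly to bookkeeping together with the classification facts and module-theoretic identifications above.
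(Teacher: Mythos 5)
The paper itself offers no proof of this proposition: it is imported verbatim (with attribution) from Premet and Semenov \cite{Semenov2}, so there is no internal argument to measure your sketch against, and deferring the hard work to that source is exactly what the paper does. The difficulty is that the justifications you interpolate are not correct as stated, so your outline cannot stand on its own. The ``classical vanishing $H^{2}(sl_{2},V)=0$ for finite-dimensional $sl_{2}$-modules in characteristic $>3$'' does not exist: Whitehead's lemmas and Weyl's complete reducibility theorem fail in characteristic $p$ (for instance $H^{1}(sl_{2},L(p-2))\neq 0$, giving a non-split extension of the trivial module, and Levi's theorem itself fails for general finite-dimensional Lie algebras in characteristic $p$). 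For the same reason, your argument for $[L,L]\cap Z(L)=\{0\}$ cannot invoke complete reducibility of $Rad(L)$ as an $S$-module merely ``because each simple component of $S$ is of type $sl_{2}$ in characteristic $>3$''. In Premet and Semenov's actual argument the logic runs the other way: it is the Lie $A$-algebra hypothesis (abelian nilpotent subalgebras, and over a finite field the resulting semisimplicity/diagonalizability of adjoint operators, in the spirit of Lemma \ref{diagonal}) that forces the relevant modules to be well behaved and the obstructions to vanish; your sketch presents these conclusions as standing classical facts, which they are not.

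Two further points. The classification-based identification of the simple constituents is both heavier than what \cite{Semenov2} uses and incomplete as stated: for $p=5$ (allowed here) the Melikian algebras must also be excluded, and over a finite field one has to deal with $F$-forms, which is precisely why the statement reads ``splits over some finite extension of the ground field'' rather than ``is isomorphic to $sl_{2}(F)$ over $F$''. Finally, in the last step of your first bullet the appeal to Theorem \ref{david} is garbled: the derived algebra of $Rad(L)$ is not contained in $A_{n}$ in general; what one needs is the decomposition of the ideals $[Rad(L),Rad(L)]$ and $Z(Rad(L))$ along the summands $A_{i}$ (or simply the identity $Z(L^{(0)})=Nil\cap A_{0}$). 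None of this matters if the intent is purely to quote \cite{Semenov2}, as the paper does, but as a reconstruction of the proof the sketch rests on statements about $sl_{2}$ in characteristic $p$ that are false.
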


%%%%% Zaicev %%%%%%%%%%%%%%%%%%%%%%%%%%%%%%%%%%%%%%%%%%%%%%%%%%%
%%%%%%%%%%%%%%%%%%%%%%%%%%%%%%%%%%%%%%%%%%%%%%%%%%%%%%%%%%%%%%%%
%%%%%%%%%%%%%%%%%%%%%%%%%%%%%%%%%%%%%%%%%%%%%%%%%%%%%%%%%%%%%%%%

A Lie algebra $L$ is said to be $G$-simple if $[L,L] \neq \{0\}$,
and $L$ does not have any proper non-trivial graded ideals.

By mimicking the arguments of Zaicev et al. in \cite{Zaicev} (Lemma
2.1; Section 3; Proposition 3.1, items i and ii), we have the
following.

\begin{proposition}\label{premet3}
Let $L$ be a finite dimensional graded Lie algebra. The ideal
$Rad(L)$ is a graded ideal. If $L$ is $G$-simple, then $L$ is a
direct sum of simple Lie algebras. If $L$ is direct sum of simple
Lie algebras, then $L$ is a direct sum of $G$-simple Lie algebras.
\end{proposition}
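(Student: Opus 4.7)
The strategy is to exploit that, since $char F \neq 2$, a $\mathbb{Z}_{2}$-grading on $L$ is equivalent data to an involutive Lie algebra automorphism $\sigma : L \to L$, namely $\sigma(x_{0} + x_{1}) = x_{0} - x_{1}$ for $x_{i} \in L_{i}$; a subspace of $L$ is graded if and only if it is $\sigma$-stable. Using this, the first claim is immediate: $\sigma(Rad(L))$ is the image of a solvable ideal under an automorphism, hence is itself a solvable ideal, so $\sigma(Rad(L)) \subseteq Rad(L)$, and the reverse inclusion follows from $\sigma^{2} = \mathrm{id}$. Thus $Rad(L)$ is $\sigma$-stable, and therefore graded.

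Next, assume $L$ is $G$-simple. The graded ideal $Rad(L)$ must be $0$ or all of $L$. In the second case $L$ is solvable and $[L,L]$ is a proper graded ideal (proper because every nonzero solvable Lie algebra satisfies $[L,L] \subsetneq L$); $G$-simplicity would then force $[L,L] = 0$, contradicting the standing assumption $[L,L] \neq 0$ in the definition of $G$-simplicity. So $L$ is semisimple, and then Proposition \ref{premet}(ii) writes $L$ as a direct sum of $F$-simple ideals. Equivalently, in the spirit of Zaicev et al., choose a minimal ideal $M$ of $L$: if $\sigma(M) = M$ then $M$ is graded and therefore equals $L$, so $L$ is simple; otherwise $M \cap \sigma(M)$ is a proper $\sigma$-stable subideal of $M$ and must vanish, while $M + \sigma(M)$ is a nonzero graded ideal and must equal $L$, giving $L = M \oplus \sigma(M)$.

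For the third claim, write $L = L_{1} \oplus \cdots \oplus L_{n}$ with each $L_{i}$ simple; then the $L_{i}$ are exactly the minimal ideals of $L$, and $\sigma$ permutes this finite set. Partition it into $\sigma$-orbits $O_{1}, \ldots, O_{r}$ (each of size $1$ or $2$ since $\sigma$ is an involution) and set $L_{O_{j}} := \bigoplus_{i \in O_{j}} L_{i}$; each $L_{O_{j}}$ is $\sigma$-stable and hence graded, they sum directly to $L$, and any proper $\sigma$-stable subsum of the simple components inside a single orbit would contradict transitivity of $\sigma$ on that orbit, so each $L_{O_{j}}$ is $G$-simple. The only genuinely delicate point throughout is the implicit step in the second paragraph that a minimal ideal of a semisimple Lie algebra is simple --- a statement which can fail in positive characteristic and is supplied in our setting exactly by Proposition \ref{premet} (Premet--Semenov), whose decomposition of the semisimple part into $F$-simple ideals is what legitimizes the conclusion.
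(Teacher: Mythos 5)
Your translation of the $\mathbb{Z}_{2}$-grading into an involutive automorphism $\sigma(x_{0}+x_{1})=x_{0}-x_{1}$, with ``graded subspace $=$ $\sigma$-stable subspace'' (valid since $\mathrm{char}\,F\neq 2$), is exactly the mechanism behind the paper's appeal to Zaicev et al.\ \cite{Zaicev}; the paper itself writes down no proof, only that citation. Your arguments for the first claim ($\sigma(Rad(L))$ is a solvable ideal, hence $Rad(L)$ is $\sigma$-stable and therefore graded) and for the third claim ($\sigma$ permutes the simple summands, which are precisely the minimal ideals, and the orbit sums are graded and $G$-simple) are correct and are essentially the intended ones; in the third claim you tacitly use that every ideal of a direct sum of simple Lie algebras is a subsum of the summands, which is standard since each summand is centerless.

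The genuine problem is your treatment of the second claim. Proposition \ref{premet} is stated only for Lie $A$-algebras over a finite field of $\mathrm{char}\,F>3$, whereas Proposition \ref{premet3} carries no $A$-algebra hypothesis, so you are not entitled to invoke it; and you are right that semisimplicity alone is too weak in characteristic $p$ (a semisimple algebra can have a non-simple minimal ideal of Block type $S\otimes F[x]/(x^{p})$), so the step really does need an argument. But the argument you need is already contained in your own ``equivalently'' sketch, and it requires no input about minimal ideals of semisimple algebras: if $\sigma(M)=M$, then $M$ is a graded nonzero ideal, so $M=L$; since $M$ was a minimal ideal, $L$ has no proper nonzero ideals at all and $[L,L]\neq 0$, so $L$ is simple. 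If $\sigma(M)\neq M$, then $M\cap\sigma(M)=0$ gives $[M,\sigma(M)]\subseteq M\cap\sigma(M)=0$, so every ideal of $M$ is automatically an ideal of $L=M\oplus\sigma(M)$; minimality of $M$ then makes $M$ (and hence $\sigma(M)$) simple, once one notes $M$ cannot be abelian, since otherwise $L$ would be abelian, contradicting $[L,L]\neq 0$. Thus the ``delicate point'' you flag evaporates, no appeal to Premet--Semenov is needed, and indeed the semisimplicity discussion at the start of your second paragraph can be dispensed with entirely. As written, however, the reliance on Proposition \ref{premet} in the stated generality is a real gap; replace it by the completion above and the proof is sound and coincides with the Zaicev-style argument the paper intends.
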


\section{Graded identities and varieties of graded Lie algebras}

%%%% Variedades %%%%%%%%%%%%%%%%%%%%%%%%%%%%%%%%%%
%%%%%%%%%%%%%%%%%%%%%%%%%%%%%%%%%%%%%%%%%%%%%%%%%%

Let $X = \{X_{g} = \{x_{1}^{g},\ldots,x_{n}^{g},\ldots\} | g \in
G\}$ be a class of pairwise-disjoint enumerable sets, where $X_{g}$
denotes the variables of $G$-degree $g$. Let $F\langle X \rangle$ be
the free associative unital algebra and let $L(X)$ be the Lie
subalgebra of $F\langle X \rangle$ generated by $X$. $L(X)$ is known
to be isomorphic to the free Lie algebra with a set of free
generators $X$. The algebras $L(X)$ and $F\langle X \rangle$ have
natural $G$-grading. A graded ideal $I \subset L(X)$ invariant under
all graded endomorphisms is called a graded verbal ideal. Let $S
\subset L(X)$ be a non empty set. The graded verbal ideal generated
by $S$, $\langle S \rangle_{T}$, is defined as the intersection of
all verbal ideals containing $S$. A polynomial $f \in L(X)$ is
called a consequence of $g \in L(X)$ when $f \in \langle g
\rangle_{T}$, and it is called a graded polynomial identity for a
graded Lie algebra $L$ if $f$ vanishes on $L$ whenever the variables
from $X_{g}$ are substituted by elements of $L_{g}$ for all $g \in
G$. We denote by $Id_{G}(L)$ the set of all graded identities of
$L$. The variety determined by $S \subset L(X)$ is denoted by
$\mathcal{V}(S) = \{A \ \mbox{is a} \ \mbox{G-graded Lie algebra} |
Id_{G}(A) \supset \langle S \rangle_{T}\}$. The variety generated by
a graded Lie algebra $L$ is denoted by $var_{G}(L) = \{A \ \mbox{is
a} \ \ \mbox{G-graded Lie algebra} | Id_{G}(L) \subset Id_{G}(A)\}$.
We say that a class of graded Lie algebras $\{L_{i}\}_{i \in
\Gamma}$, where $\Gamma$ is an index set, generates $\mathcal{V}(S)$
when $\langle S \rangle_{T} = \bigcap_{i \in \Gamma} Id_{G}(L_{i})$.

We denote by $Id(L)$ the set of all ordinary identities of a Lie
algebra $L$, and by $var(L)$ the variety generated by $L$. The
variety of metabelian Lie algebras over $F$ is denoted by $A^{2}$. A
set $S \subset L(X)$ of ordinary polynomials (respectively graded
polynomials) is called a basis for the ordinary identities
(respectively graded identities) of a Lie algebra (respectively a
graded Lie algebra) $A$ when $Id(A) = \langle S \rangle_{T}$
(respectively $Id_{G}(A) = \langle S \rangle_{T}$).

\begin{example}
In 1990's, Semenov (Proposition 1, \cite{Semenov}) proved that
\begin{flushleft}
$Sem_{1}(x_{1},x_{2}) = (x_{1})f(ad(x_{2})), \ \ f(t) = t^{q^{2} +
2} - t^{3}$,
\end{flushleft}
\begin{flushleft}
$Sem_{2}(x_{1},x_{2}) = [x_{1},x_{2}] - [x_{1},x_{2},(x_{1}
)^{q^{2}-1}] - [x_{1},(x_{2})^{q}] +
[x_{1},x_{2},(x_{1})^{q^{2}-1},(x_{2})^{q-1}] + [x_{1},x_{2}
,((x_{1})^{q^{2}} - (x_{1})),[x_{1},x_{2}]^{q-2},(x_{2})^{q^{2}} -
(x_{2})]- [x_{2},([(x_{1})^{q^{2}} - (x_{1}),x_{2}
])^{q},((x_{2})^{q^{2} - 2} - (x_{2})^{q - 2})]$.
\end{flushleft}
%%% Resultado do SEMENOV %%%%%%%%%%%%%%%%%%%%%%%%%%%%%%%%%%%%%%%
%%% Resultado do SEMENOV %%%%%%%%%%%%%%%%%%%%%%%%%%%%%%%%%%%%%%%
are polynomial identities of $sl_{2}(F)$.
%%%%%%%%%%%%%%%%%%%%%%%%%%%%%%%%%%%%%%%%%%%%%%%%%%%%%%%%%%%%%%%%%
%%%%%%%%%%%%%%%%%%%%%%%%%%%%%%%%%%%%%%%%%%%%%%%%%%%%%%%%%%%%%%%%%
\end{example}

A finite-dimensional ordinary (respectively graded) Lie algebra $L$
is critical if $var(L)$ (respectively $var_{G}(L)$) is not generated
by all proper subquotients of $L$. It is monolithic if it contains a
single ordinary (respectively graded) minimal ideal. This single
ideal is termed a monolith. It is known that if $L$ is an ordinary
(respectively graded) critical Lie algebra, then $L$ is monolithic.
Notice that if $L = \oplus_{g \in G} L_{g}$ is a critical ordinary
Lie algebra, then $L$ is critical as a $G$-graded Lie algebra.

\begin{example}
If $L$ is a critical abelian (respectively graded) Lie algebra, then
$dim L = 1$. If $L$ is a two-dimensional (non abelian) metabelian
Lie algebra, then $L$ is critical. Furthermore, $sl_{2}(F)$ is a
critical Lie algebra.
\end{example}

\begin{proposition}\label{tow}
Let $L = \bigoplus_{g \in G}L_{g}$ be a finite-dimensional (non
abelian) metabelian graded Lie $A$-algebra over an arbitrary field
$F$. If $L$ is monolithic, then $Nil(L) = [L,L]$.
\end{proposition}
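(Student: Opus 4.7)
The plan is to reduce the statement to showing $Z(L) = \{0\}$, and then derive a contradiction from monolithicity. The main input is the Remark following Theorem \ref{david}, which says that for any finite-dimensional metabelian Lie $A$-algebra over an arbitrary field, $Nil(L) = [L,L] +^{\cdot} Z(L)$. In particular $[L,L] \cap Z(L) = \{0\}$, so the proposition reduces to establishing $Z(L) = \{0\}$.

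Assume for contradiction that $Z(L) \neq \{0\}$. Since $Z(L)$ is a graded ideal of $L$ (noted in the preliminaries), it decomposes as $Z(L) = \bigoplus_{g \in G}(Z(L) \cap L_g)$, so one can pick a nonzero homogeneous element $z \in Z(L) \cap L_g$ for some $g \in G$. Then $Fz$ is a one-dimensional graded ideal of $L$: graded because $z$ is homogeneous, and an ideal because $z$ is central. Hence $Z(L)$ contains a minimal graded ideal of $L$. On the other hand, $[L,L]$ is a nonzero graded ideal (since $L$ is non-abelian), and by finite-dimensionality it also contains some minimal graded ideal.

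Since $L$ is monolithic, there is a \emph{unique} minimal graded ideal $M$; by the previous paragraph, $M \subseteq Z(L)$ and $M \subseteq [L,L]$. But then $M \subseteq [L,L] \cap Z(L) = \{0\}$, contradicting $M \neq \{0\}$. Therefore $Z(L) = \{0\}$, and the Remark yields $Nil(L) = [L,L]$.

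The argument is short and I do not foresee a serious obstacle. The whole proof rests on two ingredients: the decomposition $Nil(L) = [L,L] +^{\cdot} Z(L)$ supplied by the Remark, and the standard finite-dimensional fact that every nonzero graded ideal contains a minimal one. The only point worth being mindful of is making sure that $Z(L)$ is \emph{graded} (so that the minimal graded ideal we extract actually lies inside it), but this has already been recorded in the preliminaries.
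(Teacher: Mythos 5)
Your proof is correct and follows the same route as the paper: both rest on the decomposition $Nil(L) = [L,L] +^{\cdot} Z(L)$ from the Remark after Theorem \ref{david} and then conclude $Z(L) = \{0\}$ from monolithicity. The paper simply asserts that last step, whereas you spell out the argument (both $Z(L)$ and $[L,L]$ would contain the unique minimal graded ideal, forcing it into $[L,L]\cap Z(L)=\{0\}$), which is exactly the intended justification.
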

\begin{proof}
According to Theorem \ref{david}, $Nil(L) = [L,L] +^{\cdot} Z(L)$.
By hypothesis, $L$ is monolithic. Thus, $Z(L) = \{0\}$ and $Nil(L) =
[L,L]$.
\end{proof}

%%% Sheina %%%%%%%%%%%%%%%%%%%%%%%%%%%%
%%%%%%%%%%%%%%%%%%%%%%%%%%%%%%%%%%%%%%%
%%%%%%%%%%%%%%%%%%%%%%%%%%%%%%%%%%%%%%%

The next theorem describes the relationship between critical
metabelian Lie $A$-algebras and monolithic Lie $A$-algebras.

\begin{theorem}[Sheina, Theorem 1,\cite{Sheina}]\label{sheina}
A finite-dimensional monolithic Lie $A$-algebra $L$ over an
arbitrary finite field is critical if, and only if, its derived
algebra cannot be represented as a sum of two ideals strictly
contained within it.
\end{theorem}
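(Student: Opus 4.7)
The plan is to prove both directions by contrapositive, turning each variety-theoretic statement into a structural assertion about the ideals of $L$ via Birkhoff's HSP theorem.

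For the forward direction (critical $\Rightarrow$ no such decomposition), suppose $[L,L] = I_{1} + I_{2}$ with $I_{1}, I_{2}$ proper $L$-ideals in $[L,L]$. Since $L$ is monolithic, the monolith $M$ sits inside $I_{1}\cap I_{2}$, so the quotients $L_{j} := L/I_{j}$ are proper subquotients. The diagonal homomorphism $a \mapsto (a+I_{1}, a+I_{2})$ identifies $L/(I_{1}\cap I_{2})$ with a subalgebra of $L_{1}\oplus L_{2}$; a dimension count shows that this subalgebra surjects onto the product $[L_{1},L_{1}]\oplus [L_{2},L_{2}]$ of derived subalgebras. Combining $L/(I_{1}\cap I_{2})$ with the proper quotient $L/M$ and using the Lie $A$-structure to bridge the gap through $I_{1}\cap I_{2}$, one aims to conclude $L \in \mathrm{var}(L_{1}, L_{2}, L/M)$, contradicting criticality.

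For the reverse direction (no decomposition $\Rightarrow$ critical), assume $L$ lies in the variety generated by some family $\{L_{\alpha}\}$ of proper subquotients. By HSP, $L$ is a quotient of a subalgebra of $\prod_{\alpha} L_{\alpha}$. By analyzing the projection kernels, and using that the Lie $A$-property passes to subquotients (so each $L_{\alpha}$ is again a Lie $A$-algebra and Proposition \ref{premet} applies throughout), one should extract two $L$-ideals $J_{1}, J_{2} \subsetneq [L,L]$ with $J_{1}+J_{2} = [L,L]$, contradicting the hypothesis.

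The main obstacle, especially in the forward direction, is promoting the embedding $L/(I_{1}\cap I_{2}) \hookrightarrow L_{1}\oplus L_{2}$ to a statement about $L$ itself: the "missing" data lives in $I_{1}\cap I_{2}$, which contains the monolith. Closing this gap relies on the structural features of Lie $A$-algebras, namely $[L,L]\cap Z(L) = \{0\}$ and the Levi decomposition from Proposition \ref{premet}, together with the canonical decomposition $Nil(L) = A_{n} +^{\cdot} (A_{n-1}\cap Nil(L)) +^{\cdot} \ldots$ from Theorem \ref{david}. I would run an induction on the derived length: the metabelian base case is handled cleanly via Proposition \ref{tow} (where $Nil(L) = [L,L]$ forces $[L,L]$ to be, up to $L$-module semisimplicity, a single copy of $M$), and the non-solvable case is reduced by treating the Levi subalgebra (a sum of $sl_{2}(F)$-blocks over field extensions) separately from the solvable radical.
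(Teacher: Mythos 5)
This theorem is not proved in the paper at all: it is imported verbatim from Sheina (Theorem 1 of \cite{Sheina}) and used as a black box, so there is no proof of the author's to compare yours against. Judged on its own terms, your proposal is a plan rather than a proof, and the two places where you write ``one aims to conclude'' and ``one should extract'' are exactly where the content of Sheina's theorem lives. In the forward direction, the specific bridging device you suggest does not work as stated: since the monolith $M$ lies in $I_{1}\cap I_{2}$, an element of $Id(L/I_{1})\cap Id(L/I_{2})$ only has values in $I_{1}\cap I_{2}$, and throwing $L/M$ into the generating family only improves this to values in $M\cap I_{1}\cap I_{2}=M$, not to $0$; so $L\in var(L/I_{1},L/I_{2},L/M)$ does not follow from the diagonal map of $L/(I_{1}\cap I_{2})$, whose kernel contains $M$ by construction. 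The known way to close this is different in kind: one first uses the Lie $A$-algebra structure (semisimplicity of the adjoint action of an abelian complement, as in Lemma \ref{diagonal}, together with a Wedderburn-type splitting $L=B+^{\cdot}[L,L]$ coming from Theorem \ref{david}) to \emph{refine} the decomposition $[L,L]=I_{1}+I_{2}$ to a direct sum of ideals, after which the map $b+a_{1}+a_{2}\mapsto (b+a_{1}+I_{2},\,b+a_{2}+I_{1})$ embeds $L$ itself, not merely $L/(I_{1}\cap I_{2})$, into $L/I_{2}\oplus L/I_{1}$, giving non-criticality directly. Your sketch names the right ingredients (Theorem \ref{david}, Proposition \ref{tow}, Proposition \ref{premet}) but never performs either the complete-reducibility refinement or the construction of the complement, which are the actual mechanisms.

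The reverse direction has the same defect: from $L$ lying in the variety generated by proper subquotients you assert that ``analyzing the projection kernels'' produces ideals $J_{1},J_{2}\varsubsetneq[L,L]$ with $J_{1}+J_{2}=[L,L]$, but no argument is given for how kernels of a subdirect representation of a \emph{quotient of a subalgebra} of a product descend to ideals of $L$ itself with that property; this passage (through HSP, then back up to $L$ using monolithicity and the $A$-structure) is again the substantive part and cannot be waved through. So while the overall architecture (contrapositives, subdirect products, induction on derived length, reduction of the metabelian case via complete reducibility) is a reasonable reconstruction of how such theorems are proved, as written the proposal has genuine gaps at both of its decisive steps, and the forward-direction bridge via $L/M$ is not just incomplete but would fail.
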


%%%% Localmente finitas %%%%%%%%%%%%%%%%
%%%%%%%%%%%%%%%%%%%%%%%%%%%%%%%%%%%%%%%%

A locally finite Lie algebra is a Lie algebra for which every
finitely generated subalgebra is finite. A variety of Lie algebras
(respectively graded Lie algebras) is said to be locally finite when
every finitely generated Lie algebra (respectively graded Lie
algebra) has finite cardinality. It is known that a variety
generated by a finite Lie algebra (respectively a graded finite Lie
algebra) is locally finite. As in the ordinary case, if a variety of
graded Lie algebras is locally finite, then it is generated by its
critical algebras. For more details about varieties of Lie algebras,
see Chapters 4 and 7 of \cite{Bahturin}.

The next result will prove useful for our purposes.

\begin{theorem}[Semenov, Proposition 2,
\cite{Semenov}]\label{chan} Let $\mathcal{B}$ be a variety of
ordinary Lie algebras over a finite field $F$. If there exists a
polynomial $f(t) = a_{1}t + \ldots + a_{n}t^{n} \in F[t]$ such that
$yf(adx) := a_{1}[y,x] + \ldots + a_{n}[y,x^{n}] \in
Id(\mathcal{B})$, then $\mathcal{B}$ is a locally finite variety.
\end{theorem}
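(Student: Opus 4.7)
The plan is to show that every finitely generated Lie algebra $L$ in $\mathcal{B}$ has finite cardinality. Since $F$ is finite, this reduces to proving $\dim_{F} L<\infty$. Fix generators $x_{1},\ldots,x_{k}$ of $L$ and set $V=\mathrm{span}_{F}\{x_{1},\ldots,x_{k}\}$.

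First I would assume without loss of generality that $a_{n}\neq 0$ (truncating $f$ otherwise), so the identity rearranges to
\begin{equation*}
[y,x^{n}] \;=\; -a_{n}^{-1}\sum_{i=1}^{n-1}a_{i}\,[y,x^{i}],
\end{equation*}
which I read as the vanishing of the operator $f(ad\,x)$ on every $y$. Consequently, for every $x\in L$ the endomorphism $ad\,x\in\mathrm{End}_{F}(L)$ is algebraic over $F$ of degree at most $n$, with minimal polynomial dividing $f(t)$; since $F$ is finite, there are only finitely many such possible minimal polynomials.

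Next, since $L$ is generated by $x_{1},\ldots,x_{k}$, every element of $L$ is an $F$-linear combination of left-normed brackets $[x_{i_{0}},x_{i_{1}},\ldots,x_{i_{s}}]=(ad\,x_{i_{s}})\cdots(ad\,x_{i_{1}})(x_{i_{0}})$. Let $R\subseteq\mathrm{End}_{F}(L)$ be the unital associative $F$-subalgebra generated by $ad\,x_{1},\ldots,ad\,x_{k}$; then $L\subseteq V+R\cdot V$, so it is enough to show $\dim_{F}R<\infty$. Applying the reduction above with $x=x_{j}$ and arbitrary $y$ lets one rewrite $(ad\,x_{j})^{n}$ as a linear combination of lower powers of $ad\,x_{j}$, so every monomial in the generators of $R$ is an $F$-combination of \emph{reduced} monomials in which no $ad\,x_{j}$ appears $n$ or more times consecutively. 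The Jacobi-derived relation $[ad\,x_{i},ad\,x_{j}]=ad\,[x_{i},x_{j}]$ further allows rearrangement of factors modulo $ad$'s of more complicated Lie polynomials in the $x_{i}$, each of which is again algebraic of degree $\leq n$.

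The main obstacle is closing this Shirshov-type height argument to bound the total length of reduced monomials. The crucial ingredient is that the identity applies to \emph{every} $x\in L$, not just to the generators, so every $ad\,y$ with $y\in L$ has its minimal polynomial in a single finite list (the divisors of $f(t)$ over $F$), leaving only finitely many possible "types" of basic factor to manage in the combinatorics. Completing the reduction yields $\dim_{F}R<\infty$, whence $\dim_{F}L<\infty$; as $F$ is finite, $L$ itself is finite, proving that $\mathcal{B}$ is a locally finite variety.
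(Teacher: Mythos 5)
There is a genuine gap, and it sits at the heart of your argument. Note first that the paper does not prove this statement at all: it is quoted from Semenov \cite{Semenov}, so your proposal has to stand on its own. The reduction to showing $\dim_{F}R<\infty$ for the associative envelope $R$ of $ad\,x_{1},\ldots,ad\,x_{k}$ is reasonable, but the ``Shirshov-type height argument'' you invoke to finish is exactly the hard part, and it cannot be completed as described. The hypothesis gives algebraicity of degree at most $n$ only for operators of the form $ad\,y$ with $y\in L$, i.e.\ for the Lie elements of $R$; a general word such as $(ad\,x_{1})(ad\,x_{2})$ is not $ad$ of anything and is not known to be algebraic, let alone of bounded degree. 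The Kurosh--Shirshov--Kaplansky machinery requires either a polynomial identity for $R$ or algebraicity of bounded degree for all words in the generators; neither is available, since $L$ (hence $R\subseteq\mathrm{End}_{F}(L)$) is not yet known to be finite-dimensional --- that is what you are trying to prove. Deleting $n$ consecutive equal factors and commuting factors via $[ad\,x_{i},ad\,x_{j}]=ad\,[x_{i},x_{j}]$ still leaves infinitely many reduced words (for instance $(ad\,x_{1}\,ad\,x_{2})^{k}$ for all $k$), the rewriting introduces $ad$'s of ever longer Lie monomials with no termination argument, and the observation that the minimal polynomials of all $ad\,y$ lie in a fixed finite list bounds neither lengths nor heights.

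The strength of the statement shows why no routine combinatorial reduction of this kind can suffice: taking $f(t)=t^{n}$, the theorem asserts that finitely generated $n$-Engel Lie algebras over a finite field are finite, which is the Lie-theoretic analogue of the restricted Burnside problem, settled only by Kostrikin (for $n<p$) and Zelmanov in general. Any correct proof must therefore either invoke results of that calibre or exploit the hypotheses more substantively --- e.g.\ a Fitting-style decomposition of $L$ with respect to the nilpotent and invertible parts of each $ad\,x$, with the nil part handled by bounded-degree Engel theorems --- rather than a direct word-reduction in the associative envelope. As written, your proposal names the obstacle and then asserts its resolution, so the proof is incomplete at its decisive step.
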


%%%% Monolitos %%%%%%%%%%%%%%%%%%%%%%%
%%%% Monolitos %%%%%%%%%%%%%%%%%%%%%%%

Let $L_{1}$ and $L_{2}$ be two graded Lie algebras (finite
-dimensional), and $I_{1} \subset L_{1}$ and $I_{2} \subset L_{2}$
be graded ideals. We say that $I_{1}$ (in $L_{1}$) is similar to
$I_{2}$ (in $L_{2}$) ($I_{1} \trianglelefteq A_{1} \sim I_{2}
\trianglelefteq A_{2}$) if there exist isomorphisms $\alpha_{1} :
I_{1} \rightarrow I_{2}$ and $\alpha_{2}:
\frac{L_{1}}{C_{L_{1}}(I_{1})} \rightarrow
\frac{L_{2}}{C_{L_{2}}(I_{2})}$ such that for all $a \in I_{1}$ and
$b + C_{L_{1}}(I_{1}) \in \frac{L_{1}}{C_{L_{1}}(I_{1})}$:
\begin{center}
$\alpha_{1}([a,c]) = [\alpha_{1}(a),d]$,
\end{center}
where $c + C_{L_{1}}(I_{1}) = b + C_{L_{1}}(I_{1}) \ \mbox{and} \ d
+ C_{L_{2}}(I_{2}) = \alpha_{2}(b + C_{L_{1}}(I_{1})) $.

By proceeding as in \cite{Hanna} (cf. pages 162 to 166), we have the
following.

\begin{proposition}\label{hanna-teo}
If two critical graded Lie algebras $L_{1}$ and $L_{2}$ generate the
same variety, then their monoliths are similar.
\end{proposition}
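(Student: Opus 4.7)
The plan is to adapt Hanna Neumann's classical critical-algebra argument (cf.~her book on varieties of groups, pages 162--166) to the $G$-graded Lie setting. Denote by $M_i \subset L_i$ the monolith of $L_i$ for $i = 1, 2$.

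First I would obtain a convenient representation of $L_1$ inside graded powers of $L_2$. Since $L_1$ and $L_2$ are finite and generate the same graded variety $\mathcal{V}$, that variety is locally finite, so $L_1 \in var_{G}(L_2)$ implies $L_1 \cong N/K$ for some graded subalgebra $N$ of a finite graded direct power $L_2^n$ and some graded ideal $K$ of $N$; choose $n$ minimal with this property. The coordinate projections $\pi_i : N \to L_2$ are graded Lie homomorphisms, and minimality forces each $\pi_i$ to be surjective (otherwise one could drop the $i$-th coordinate).

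Next I would track the monolith through these projections. Let $\widetilde{M}_1 \subset N$ be the preimage of $M_1$, a graded ideal of $N$ strictly containing $K$. Because $\bigcap_{i=1}^{n} (\ker \pi_i \cap N) = \{0\}$ and $\widetilde{M}_1 \not\subseteq K$, some $\pi_i$ satisfies $\pi_i(\widetilde{M}_1) \not\subseteq \pi_i(K)$. Combining minimality of $n$, monolithicity of $L_1$, and the fact that minimal graded ideals of $L_2^n$ have the form $0 \oplus \cdots \oplus M_2 \oplus \cdots \oplus 0$, I would then select an index $i$ for which $\pi_i$ descends to a graded isomorphism $\alpha_1 : M_1 \to M_2$. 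The second isomorphism is obtained by observing that the preimage in $N$ of $C_{L_2}(M_2)$ under $\pi_i$ maps, modulo $K$, onto $C_{L_1}(M_1)$; this induces a well-defined isomorphism $\alpha_2 : L_1/C_{L_1}(M_1) \to L_2/C_{L_2}(M_2)$, and the intertwining identity $\alpha_1([a,c]) = [\alpha_1(a),d]$ falls out from $\pi_i$ being a Lie homomorphism.

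The hardest step will be the middle one, namely choosing the right coordinate $i$ so that $\pi_i$ carries $\widetilde{M}_1/K$ isomorphically onto the monolith of $L_2$ (and not onto a larger graded ideal, nor into $C_{L_2}(M_2)$ alone). This is where criticality of $L_1$ must be used crucially: if no such $i$ existed then, by chasing the resulting subdirect decomposition against the minimality of $n$, one could realize $L_1$ as a quotient of a graded subalgebra of a power of a proper subquotient of $L_2$, contradicting that $L_1$ is critical in $\mathcal{V}$. Throughout I would verify that the grading is preserved at each step; this is bookkeeping rather than a substantive difficulty, since coordinate projections of graded direct products and quotients by graded ideals are automatically graded homomorphisms.
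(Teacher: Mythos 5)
Your overall strategy is the one the paper itself intends: the paper gives no independent argument but simply says to proceed as in Neumann's book, pp.\ 162--166, and your plan is an adaptation of exactly that critical-algebra argument (finite representation $L_{1}\cong N/K$ with $N$ a graded subalgebra of $L_{2}^{n}$, chase the monolith through the coordinate projections, use criticality to pin down a good coordinate). However, as written the plan contains steps that genuinely fail, and they sit precisely at the point you yourself identify as the hardest one. First, minimality of $n$ does \emph{not} force the projections $\pi_{i}:N\to L_{2}$ to be surjective: a non-surjective coordinate cannot be ``dropped,'' it can only be replaced by the proper subalgebra $\pi_{i}(N)$, and minimality of $n$ only yields that $N\cap\bigcap_{j\neq i}\ker\pi_{j}\not\subseteq K$ for each $i$. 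Without surjectivity your definition of $\alpha_{2}$ into $L_{2}/C_{L_{2}}(M_{2})$ does not even make sense, and repairing this is exactly where the criticality of $L_{2}$ has to be used (if all the relevant data landed in proper graded subquotients of $L_{2}$, those subquotients would generate $\mathcal{V}$, contradicting that $L_{2}$ is critical). Your final appeal to the criticality of $L_{1}$ is misplaced for the same reason: realizing $L_{1}$ inside a power of a proper subquotient $D$ of $L_{2}$ contradicts nothing about $L_{1}$ (proper subquotients of $L_{2}$ are not subquotients of $L_{1}$); the contradiction is that $var_{G}(D)=\mathcal{V}$ against the criticality of $L_{2}$.

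Second, the auxiliary claim that the minimal graded ideals of $L_{2}^{n}$ all have the form $0\oplus\cdots\oplus M_{2}\oplus\cdots\oplus 0$ is false in general: a critical graded Lie algebra need not be centerless (e.g.\ nilpotent critical algebras), and when $M_{2}$ is central the ``diagonal'' copies $\{(m,\lambda_2 m,\ldots,\lambda_n m):m\in M_{2}\}$ are minimal graded ideals of $L_{2}^{n}$ not of product form; moreover ideals of the subalgebra $N$ need not be restrictions of ideals of $L_{2}^{n}$ at all. Likewise the inference ``$\widetilde{M}_{1}\not\subseteq K$, hence $\pi_{i}(\widetilde{M}_{1})\not\subseteq\pi_{i}(K)$ for some $i$'' is invalid, since the elements $k_{i}\in K$ witnessing $\pi_{i}(m)=\pi_{i}(k_{i})$ may vary with $i$, so one cannot conclude $m\in K$ from $\bigcap_{i}\ker\pi_{i}\cap N=\{0\}$. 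So the selection of the coordinate $i$ carrying $M_{1}$ isomorphically onto $M_{2}$ --- the heart of Neumann's argument, where one must handle subdirect decompositions, centralizers and the possibility of ``diagonal'' monoliths with real care --- is not actually carried out, and the specific claims you propose to carry it out with are incorrect. The skeleton is right, but this middle step needs to be reworked along the lines of the cited pages of Neumann before the proof can be considered complete.
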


\section{$\mathbb{Z}_{2}$-graded identities of $sl_{2}(F)$}

From now on, we denote by $Y = \{y_{1},\ldots,y_{n},\ldots\}$ the
even variables, by $Z = \{z_{1},\ldots,z_{n},\ldots\}$ the odd
variables.

\begin{lemma}\label{lema8}
Let $sl_{2}(F)$ be the Lie algebra of traceless $2\times 2$ matrices
over $F$ endowed with the natural grading. The following polynomials
are graded identities of $sl_{2}(F)$
\begin{center}
$[y_{1},y_{2}],[z_{1},y_{1}^{q}] = [z_{1},y_{1}]$.
\end{center}
\end{lemma}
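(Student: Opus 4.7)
The plan is to verify both identities by direct computation, using that the natural $\mathbb{Z}_2$-grading on $sl_2(F)$ has a very simple form: the even component $(sl_2(F))_0$ is one-dimensional, spanned by $h = e_{11} - e_{22}$, while the odd component $(sl_2(F))_1$ is two-dimensional, spanned by $e_{12}$ and $e_{21}$.

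For the first identity, $[y_1, y_2] = 0$, the argument is immediate: any admissible substitution replaces $y_1, y_2$ by scalar multiples of $h$, and since $(sl_2(F))_0$ is one-dimensional it is abelian, so the commutator vanishes.

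For the second identity, I would substitute $y_1 = \alpha h$ with $\alpha \in F$ and $z_1 = \beta e_{12} + \gamma e_{21}$ with $\beta, \gamma \in F$. Using the relations $[e_{12}, h] = -2 e_{12}$ and $[e_{21}, h] = 2 e_{21}$, a single application of $\mathrm{ad}\, y_1$ yields
\begin{equation*}
[z_1, y_1] = -2\alpha\beta\, e_{12} + 2\alpha\gamma\, e_{21}.
\end{equation*}
Iterating $q$ times gives
\begin{equation*}
[z_1, y_1^{q}] = (-2\alpha)^{q}\beta\, e_{12} + (2\alpha)^{q}\gamma\, e_{21}.
\end{equation*}
Since $|F| = q$, Fermat's little theorem yields $a^{q} = a$ for every $a \in F$, so $(-2\alpha)^{q} = -2\alpha$ and $(2\alpha)^{q} = 2\alpha$. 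Therefore $[z_1, y_1^{q}] = [z_1, y_1]$, as required.

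There is no real obstacle here; the only subtlety is remembering to invoke $a^{q} = a$ at the end. The hypothesis $\mathrm{char}\, F > 3$ is implicitly used through the assertion that $2$ is invertible in $F$ (so that the coefficients $\pm 2$ do not collapse), but the verification itself is a routine calculation in the three-dimensional algebra $sl_2(F)$.
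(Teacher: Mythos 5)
Your proof is correct and follows essentially the same route as the paper: the even component is abelian (being the diagonal/one-dimensional), and the second identity is verified by the same direct eigenvalue computation with $[e_{12},h]=-2e_{12}$, $[e_{21},h]=2e_{21}$ together with $a^{q}=a$ for all $a\in F$. (The closing remark about needing $2$ invertible is inessential — the identity would hold in any case — but it does not affect the argument.)
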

\begin{proof}
It is clear that $[y_{1},y_{2}] \in Id_{G}(sl_{2}(F))$, because the
diagonal is commutative. Choose $a_{i} = \lambda_{11,i}e_{11} -
\lambda_{11,i}e_{22}$ and $b_{j} = \lambda_{12,j}e_{12} +
\lambda_{21,j}e_{21}$, so:
\begin{center}
$[b_{j},a_{i}^{q}] = \lambda_{11,i}^{q}[b_{j}, {h}^{q}] =
\lambda_{11,i}^{q}((-2)^{q}\lambda_{12,j}e_{12} +
2^{q}\lambda_{21,j}e_{21}) = \lambda_{11,i}(-2\lambda_{12,j}e_{12} +
2\lambda_{21,j}e_{21}) = [b_{j},a_{i}]$.
\end{center}
Thus, $[z_{1},y_{1}^{q}] = [z_{1},y_{1}] \in Id_{G}(sl_{2}(F))$. The
proof is complete.
\end{proof}

We now cite two papers. First we present a corollary of Bahturin et
al. in \cite{Bahturin2}.

\begin{proposition}[Bahturin et al., Corollary 1,
\cite{Bahturin2}]\label{bah} Let $R = M_{n}(F)$, $char F = p > 0$,
$p \neq 2$. Let $G$ be an elementary abelian $p$-group. Suppose that
$R = \bigoplus_{g \in G} R_{g}$ is a grading on $R^{(-)}$. Then $R =
\bigoplus_{g \in G} R_{g}$ is a grading on $R$ if and only if $1 \in
R_{e}$.
\end{proposition}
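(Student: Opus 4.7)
My plan is to address the two directions separately.

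\emph{Necessity.} Suppose $R = \bigoplus_{g}R_{g}$ is an associative grading and write $1 = \sum_{g} 1_{g}$ with $1_{g} \in R_{g}$. For any homogeneous $x \in R_{h}$, the identity $1 \cdot x = x$ together with comparison of $G$-components gives $1_{g} \cdot R_{h} = \{0\}$ whenever $g \neq e$; specializing to $x = 1$ then forces $1_{g} = 0$ for every $g \neq e$, so $1 \in R_{e}$.

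\emph{Sufficiency.} Assume $R = \bigoplus_{g}R_{g}$ is a $G$-grading of $R^{(-)}$ with $1 \in R_{e}$. Given $a \in R_{g}$ and $b \in R_{h}$, the Lie grading already yields $ab - ba = [a,b] \in R_{gh}$, so the real task is to control the symmetric part $ab + ba$. My plan is to exploit the restricted Lie algebra structure of $M_{n}(F)^{(-)}$, whose $[p]$-operation coincides with the associative $p$-th power $a \mapsto a^{p}$, together with the fact that $G$ has exponent $p$.

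The key intermediate step is the claim: \emph{if $a \in R_{g}$, then $a^{p} \in R_{e}$.} Since $g^{p} = e$, one has $(ad\,a)^{p}(R_{h}) \subseteq R_{g^{p}h} = R_{h}$ for every $h$; combined with Jacobson's identity $(ad\,a)^{p} = ad\,(a^{p})$ valid in any restricted Lie algebra, this shows $ad\,(a^{p})$ preserves every $R_{h}$. Decomposing $a^{p} = \sum_{g'}(a^{p})_{g'}$ and comparing components, each summand with $g' \neq e$ must centralize all of $R^{(-)}$; hence it lies in $Z(R^{(-)}) = F \cdot 1 \subseteq R_{e}$, and being also in $R_{g'}$ with $g' \neq e$, it vanishes. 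Thus $a^{p} \in R_{e}$.

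From here I would bootstrap to the full conclusion by polarizing. Jacobson's formula expresses $(a+b)^{p}$ as $a^{p} + b^{p}$ plus a sum of Lie monomials $s_{i}(a,b)$ of bidegree $(i, p-i)$, each automatically graded in $R_{(gh^{-1})^{i}}$. Together with the graded decomposition $R = F \cdot 1 \oplus [R,R]$ (when $p \nmid n$; pass to $R/F \cdot 1$ if $p \mid n$), this should pin the symmetric part $ab + ba$ inside $R_{gh}$. The main obstacle is precisely this last step: in characteristic $p$, the naive symmetrization of a degree-$p$ map vanishes, so one must combine Jacobson's formula with iterated adjoint actions and the restricted-Lie identity in order to extract the grade of $ab + ba$ from the already-graded quantities.
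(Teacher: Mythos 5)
Your necessity argument is fine, and your intermediate claim is both correct and correctly proved: for homogeneous $a \in R_{g}$ one has $(ad\,a)^{p} = ad(a^{p})$ preserving every component because $g^{p}=e$, so each component of $a^{p}$ of degree $g'\neq e$ centralizes $R^{(-)}$, lies in $Z(R^{(-)})=F\cdot 1\subseteq R_{e}$, and therefore vanishes; this genuinely uses $1\in R_{e}$. (Note that the paper itself offers no proof of this proposition --- it is quoted from Corollary 1 of \cite{Bahturin2} --- so you are being measured against that source, not against anything in the text.)

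The genuine gap is the passage from ``homogeneous $p$-th powers lie in $R_{e}$'' to $R_{g}R_{h}\subseteq R_{gh}$, and the polarization plan you sketch cannot close it. In Jacobson's formula $(a+b)^{p}=a^{p}+b^{p}+\sum_{i}s_{i}(a,b)$ the correction terms $s_{i}$ are \emph{Lie} polynomials, so once the Lie grading and your claim are granted, every term of that identity is already homogeneous of the expected degree: the formula is consistent but produces no new associative information, and the element $ab+ba$ simply never appears in it. The binary symmetrization $(a+b)^{2}-a^{2}-b^{2}$, which would produce $ab+ba$, is not controlled by anything you have established, since only $p$-th powers of homogeneous elements are constrained; likewise passing to $R/F\cdot 1$ discards rather than recovers associative structure. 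A route that does work (and is in the spirit of \cite{Bahturin2}) is dual: because $G$ is elementary abelian of exponent $p$ and $char\,F=p$, the given grading of $R^{(-)}$ is the joint eigenspace decomposition of commuting Lie derivations $d_{1},\ldots,d_{r}$ of $R^{(-)}$ with $d_{i}^{p}=d_{i}$ (define $d_{i}$ on $R_{g}$ as multiplication by the $i$-th coordinate of $g$ in $\mathbb{F}_{p}$). Every Lie derivation of $M_{n}(F)$ has the standard form $ad(a)+\tau$ with $\tau$ central-valued and vanishing on $sl_{n}$; the hypotheses $d_{i}(1)=0$ (equivalent to $1\in R_{e}$) and $d_{i}^{p}=d_{i}$ then force $\tau=0$ --- if $p\nmid n$ because $M_{n}=sl_{n}\oplus F1$, and if $p\mid n$ because the relation $d_{i}^{p}=d_{i}$ makes $\tau=ad(a^{p}-a)$ an inner derivation with central image, hence zero. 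Thus each $d_{i}$ is an associative derivation, and their joint eigenspaces multiply as required. Without some input of this kind --- something that sees the associative product and not merely the restricted Lie structure --- your final step does not go through.
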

\begin{Remark}
Here, $1$ denotes the identity matrix of $M_{n}(F)$, and $e$ denotes
the identity element of $G$.
\end{Remark}

In this study, $F$ is a finite field of $char F = p > 3$ and size
$q$. So, there exists $b \in F - \{0\}$ that is not a perfect
square.

Notice that if $sl_{2}(F) = (sl_{2}(F))_{0} \oplus (sl_{2}(F))_{1}$
is a $\mathbb{Z}_{2}$-grading on $sl_{2}(F)$, then $((sl_{2}(F))_{0}
\oplus F(e_{11} + e_{22})) \oplus (sl_{2}(F))_{1}$ is a
$\mathbb{Z}_{2}$-grading on $sl_{2}(F)$. By Proposition \ref{bah},
$((sl_{2}(F))_{0} \oplus F(e_{11} + e_{22})) \oplus (sl_{2}(F))_{1}$
is a $\mathbb{Z}_{2}$-grading on $M_{2}(F)$. The next proposition
describes the $\mathbb{Z}_{2}$-grading on $M_{2}(F)$.

\begin{proposition}[Khazal et al., Theorem 1.1, adapted, \cite{Khazal}]\label{kah}
Let $F$ be a field of $char F \neq 2$. Then any
$\mathbb{Z}_{2}$-grading of $M_{2}(F)$ is isomorphic to one of the
following.
\begin{description}
\item $(M_{2}(F)_{0},M_{2}(F)_{1}) = (M_{2}(F),0)$;
\item $(M_{2}(F)_{0},M_{2}(F)_{1}) = (Fe_{11} \oplus Fe_{22},Fe_{12} \oplus
Fe_{21})$;
\item $(M_{2}(F)_{0},M_{2}(F)_{1}) =$
\subitem $(F(e_{11} + e_{22})\oplus F(e_{12} + be_{21}),F(e_{11} -
e_{22})\oplus F(e_{12} - be_{21}))$, where $b \in F - F^{2}$.
\end{description}
\end{proposition}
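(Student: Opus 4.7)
The plan is to convert the classification of $\mathbb{Z}_2$-gradings on $M_2(F)$ into a classification of order-$\leq 2$ algebra automorphisms and then apply the Skolem--Noether theorem. Since $\mathrm{char}\,F\neq 2$, a $\mathbb{Z}_2$-grading $M_2(F)=R_0\oplus R_1$ is equivalent data to an automorphism $\sigma$ with $\sigma^2=\mathrm{id}$ (namely $\sigma=\mathrm{id}$ on $R_0$ and $-\mathrm{id}$ on $R_1$), and two gradings are isomorphic precisely when the associated $\sigma$'s are conjugate inside $\mathrm{Aut}(M_2(F))$. By Skolem--Noether every automorphism of the central simple algebra $M_2(F)$ is inner, so $\sigma=\mathrm{Ad}_g$ for some $g\in GL_2(F)$; the condition $\sigma^2=\mathrm{id}$ then forces $g^2\in Z(M_2(F))^\times=F^\times I$, i.e.\ $g^2=\lambda I$ for some $\lambda\in F^\times$. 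Moreover, two choices $g,g'$ yield isomorphic gradings iff they are conjugate in $PGL_2(F)$, so the classification reduces to listing $PGL_2(F)$-orbits of elements $g$ with $g^2\in F^\times I$.

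I would then split into two cases according to whether $\lambda\in F^\times$ is a square. If $\lambda=\mu^2$ is a square, the minimal polynomial of $g$ divides $(t-\mu)(t+\mu)$ and has distinct roots, so $g$ is diagonalizable over $F$: either $g$ is a scalar matrix, in which case $\sigma=\mathrm{id}$ and we obtain the trivial grading (i); or $g$ has eigenvalues $\pm\mu$ and is conjugate, up to the scalar $\mu$, to $e_{11}-e_{22}$, and a direct computation of the $\pm 1$-eigenspaces of $\mathrm{Ad}_{e_{11}-e_{22}}$ yields grading (ii). If $\lambda$ is not a square, set $b:=\lambda$; the minimal polynomial $t^2-b$ is then irreducible, so $g$ has no eigenvector in $F^2$ and is uniquely determined up to $GL_2(F)$-conjugacy by its rational canonical form. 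A convenient representative is $h:=e_{12}+be_{21}$, which satisfies $h^2=bI$, and computing the fixed and anti-fixed spaces of $\mathrm{Ad}_h$ produces exactly grading (iii).

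The main obstacle is the bookkeeping required to assemble the isomorphism claim correctly rather than any hard computation. I would need to verify that rescaling $g$ by $c\in F^\times$ (which replaces $\lambda$ by $c^2\lambda$ but does not change $\mathrm{Ad}_g$) leaves the grading unchanged, so that only the class of $\lambda$ in $F^\times/(F^\times)^2$ matters; and, for completeness, to check that the three listed gradings are genuinely distinct as abstract graded algebras, using $\dim R_0$ to separate (i) from the others and the $F$-algebra structure of $R_0$ (the split algebra $F\oplus F$ versus the quadratic field $F[\sqrt b]$) to separate (ii) from (iii). The remaining verifications are the explicit eigenspace computations for $\mathrm{Ad}_{e_{11}-e_{22}}$ and $\mathrm{Ad}_h$, which are mechanical.
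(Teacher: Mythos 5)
Your argument is correct, but note that the paper itself offers no proof of this proposition: it is imported verbatim (``adapted'') from Khazal--Boboc--D\u{a}sc\u{a}lescu, so there is no internal proof to compare against. Your route --- encoding a $\mathbb{Z}_2$-grading (in characteristic $\neq 2$) as an involutive automorphism $\sigma$, invoking Skolem--Noether to write $\sigma=\mathrm{Ad}_g$ with $g^2=\lambda I$, and classifying such $g$ up to $PGL_2(F)$-conjugacy according to whether $\lambda$ is a square --- is a standard and clean alternative to the original paper's more hands-on classification, and all the key steps check out: rescaling $g$ only moves $\lambda$ within its square class, the square case gives the trivial grading or the diagonal/off-diagonal grading via the eigenspaces of $\mathrm{Ad}_{e_{11}-e_{22}}$, and the non-square case reduces by the rational canonical form to $h=e_{12}+be_{21}$, whose centralizer $F I\oplus Fh$ and anticommutant $F(e_{11}-e_{22})\oplus F(e_{12}-be_{21})$ give exactly grading (iii). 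Two small remarks: the statement only claims that every grading is isomorphic to one on the list, so your distinctness checks (via $\dim R_0$ and $R_0\cong F\times F$ versus $F[\sqrt{b}]$) are welcome but not required; and over a general field distinct square classes of non-squares $b$ give non-conjugate involutions, so type (iii) is genuinely a family parametrized by $F^{\times}/(F^{\times})^2$ --- consistent with the ``where $b\in F-F^2$'' phrasing, and collapsing to a single class in the finite-field setting the paper actually uses.
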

\begin{Remark}\label{boboc}
It is well known that $(F - \{0\},.)$ is a cyclic group of order
$q-1$. By elementary theory of groups, for every divisor $d$ of $q -
1$, there exists a unique subgroup $H'$ of $(F - \{0\},.)$ of order
$d$. Let $H$ be the subgroup of order $\frac{q - 1}{2}$. It is easy
to see that there exists $b' \in (F - F^{2})\cap (F - H)$. Finally,
note that
\begin{center}
$[(e_{11} - e_{22}),(e_{12} + b'e_{21})] \neq [(e_{11} -
e_{22}),(e_{12} + b'e_{21}),\ldots,(e_{12} + b'e_{21})]$,
\end{center}
where $(e_{12} + b'e_{21})$ appears $q$ times in the expanded
commutator.
\end{Remark}

\begin{proposition}\label{Final}
Let $sl_{2}(F) = (sl_{2}(F))_{0} \oplus (sl_{2}(F))_{1}$ be a
$\mathbb{Z}_{2}$-grading on $sl_{2}(F)$ having the following
characteristics:
\begin{description}
\item $dim \ (sl_{2}(F))_{0} = 1$,
\item $[a,c^{q}] = [a,c]$ for all $a \in (sl_{2}(F))_{1}$ and $c \in
F(e_{11} + e_{22})\oplus (sl_{2}(F))_{0}$.
\end{description}
Then the $\mathbb{Z}_{2}$-gradings $((sl_{2}(F))_{0},
(sl_{2}(F))_{1})$ and $(F(e_{11} - e_{22}), Fe_{12} \oplus Fe_{21})$
are isomorphic.
\end{proposition}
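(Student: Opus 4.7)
The plan is to extend the given grading on $sl_2(F)$ to a $\mathbb{Z}_2$-grading on the associative algebra $M_2(F)$, invoke the classification from Proposition \ref{kah}, and rule out every case except the desired one by using the two hypotheses.

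First, by Proposition \ref{bah} the decomposition
\[
M_2(F) = \bigl(F(e_{11}+e_{22}) \oplus (sl_2(F))_0\bigr) \oplus (sl_2(F))_1
\]
is a $\mathbb{Z}_2$-grading on the associative algebra $M_2(F)$, and Proposition \ref{kah} asserts that it is isomorphic to one of the three listed types. The trivial grading (type 1) is ruled out immediately, because $\dim (sl_2(F))_0 = 1$ forces $\dim (M_2(F))_0 = 2$, not $4$.

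The heart of the argument is to rule out type 3. Suppose, to the contrary, that our extended grading is isomorphic to the type-3 grading for some non-square $b \in F$. Since isomorphic graded algebras satisfy the same graded identities, the hypothesis $[a, c^q] = [a, c]$ would then hold in this concrete grading. Taking $h = e_{11} - e_{22} \in (sl_2(F))_1$ and $c = e_{12} + b e_{21} \in (sl_2(F))_0$, a direct computation gives
\[
[h, c] = 2e_{12} - 2be_{21}, \qquad [h, c, c] = 4b\,h,
\]
and an easy induction yields $[h, c^{2k+1}] = (4b)^k [h, c]$ for every $k \geq 0$. Because $\mathrm{char}\, F > 3$ the integer $q$ is odd, so writing $q = 2m+1$ we obtain $[h, c^q] = (4b)^{(q-1)/2}[h, c]$. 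Since $4$ is a square and $b$ is not, Euler's criterion forces $(4b)^{(q-1)/2} = -1$; thus $[h, c^q] = -[h, c] \neq [h, c]$ (as $[h, c] \neq 0$ and $\mathrm{char}\, F > 2$), contradicting the hypothesis. This is the calculation already recorded in Remark \ref{boboc}, upgraded from a specific $b'$ to an arbitrary non-square.

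Having eliminated types 1 and 3, the extended grading must be of type 2. The graded automorphism of $M_2(F)$ supplied by Proposition \ref{kah} preserves both the scalar line $F(e_{11}+e_{22})$ and the traceless subspace $sl_2(F) = [M_2(F), M_2(F)]$, so restricting to $sl_2(F)$ produces the desired graded Lie isomorphism between $((sl_2(F))_0, (sl_2(F))_1)$ and $(F(e_{11}-e_{22}), Fe_{12} \oplus Fe_{21})$. The only non-routine point in this plan is the Euler-criterion calculation in the type-3 case; once that is in hand, the remainder is a direct application of Propositions \ref{bah} and \ref{kah}.
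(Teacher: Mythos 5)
Your proposal is correct and follows essentially the same route as the paper: extend the grading to $M_{2}(F)$ via Proposition \ref{bah}, invoke the classification of Proposition \ref{kah}, rule out the first grading by dimension and the third by failure of $[z_{1},y_{1}^{q}]=[z_{1},y_{1}]$, then restrict the resulting graded isomorphism to $sl_{2}(F)$. Your Euler-criterion computation $[h,c^{q}]=(4b)^{(q-1)/2}[h,c]=-[h,c]$ is just an explicit (and slightly stronger, since it works for every non-square $b$) version of what the paper delegates to Remark \ref{boboc}.
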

\begin{proof}
First, note that $(((sl_{2}(F))_{0} \oplus F(e_{11} + e_{22}))
\oplus (sl_{2}(F))_{1})$ is a $\mathbb{Z}_{2}$-grading on
$gl_{2}(F)$. According to Proposition \ref{bah}, $(((sl_{2}(F))_{0}
\oplus F(e_{11} + e_{22})) \oplus (sl_{2}(F))_{1})$ is a
$\mathbb{Z}_{2}$-grading on $M_{2}(F)$. It is clear that this
grading on $M_{2}(F)$ is not isomorphic to the first grading
presented in Proposition \ref{kah}. Notice also that $(F(e_{11} +
e_{22})\oplus (sl_{2}(F))_{0},(sl_{2}(F))_{1})$ cannot be isomorphic
to the third grading presented in Proposition \ref{kah}, because
$[z_{1},y_{1}] = [z_{1},y_{1}^{q}]$ is not a polynomial identity for
$M_{2}(F)$ endowed with third grading (Remark \ref{boboc}).
According to Proposition \ref{kah}, there exists a $G$-graded
isomorphism $\phi: M_{2}(F) \rightarrow M_{2}(F)$ such that
\begin{center}
$\phi((sl_{2}(F))_{0} \oplus F(e_{11} + e_{22})) = Fe_{11} \oplus
Fe_{22}$ and $\phi((sl_{2}(F))_{1}) = Fe_{12} \oplus Fe_{21}$.
\end{center}
Note that $\phi: gl_{2}(F) \rightarrow gl_{2}(F)$ is an isomorphism
of Lie algebras and $\phi(sl_{2}(F)) = sl_{2}(F)$. Thus,
$((sl_{2}(F))_{0}, (sl_{2}(F))_{1})$ and $(F(e_{11} - e_{22}),
Fe_{12} \oplus Fe_{21})$ are isomorphic. The proof is complete.
\end{proof}

Henceforth we consider only $sl_{2}(F)$ and $Fe_{11} \oplus Fe_{12}$
endowed with the natural grading by $(\mathbb{Z}_{2},+)$. Recall
that $Sem_{1}(x_{1},x_{2}), Sem_{2}(x_{1},x_{2}) \in Id(sl_{2}(F))$.
We denote by $S$ the set with following polynomials.

\begin{flushleft}
$Sem_{1}(y_{1}+z_{1},y_{2}+z_{2}),Sem_{2}(y_{1}+z_{1},y_{2}+z_{2}),[y_{1},y_{2}],
\mbox{and} \ [z_{1},y_{1}^{q}] = [z_{1},y_{1}]$.
\end{flushleft}

\begin{corollary}\label{localmente}
The variety $\mathcal{V}(S)$ is locally finite.
\end{corollary}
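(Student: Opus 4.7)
The plan is to reduce the graded variety problem to an ordinary one and invoke Theorem \ref{chan}. The key observation I would use is that $Sem_{1}(y_{1}+z_{1}, y_{2}+z_{2})$, being in $S$ and hence a graded identity on every $L = L_{0} \oplus L_{1} \in \mathcal{V}(S)$, already forces the ordinary Lie polynomial $Sem_{1}(x_{1}, x_{2})$ to vanish on the whole underlying Lie algebra of $L$. Indeed, any $a_{1}, a_{2} \in L$ decompose uniquely as $a_{i} = y_{i} + z_{i}$ with $y_{i} \in L_{0}$ and $z_{i} \in L_{1}$, so specializing the graded identity to these homogeneous components yields $Sem_{1}(a_{1}, a_{2}) = 0$ in $L$.

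Once this translation is in hand, I would observe that $Sem_{1}(x_{1}, x_{2}) = x_{1}\,f(ad\,x_{2})$ where $f(t) = t^{q^{2}+2} - t^{3} \in F[t]$ is a nonzero polynomial without constant term. Theorem \ref{chan} then applies to the ordinary variety $var(L)$ and shows it is locally finite.

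To conclude, let $L \in \mathcal{V}(S)$ be finitely generated as a graded Lie algebra; choose a finite set of homogeneous generators. The same set generates $L$ as an ordinary Lie algebra, and since $var(L)$ is locally finite, $L$ is finite. As the cardinality of $L$ does not depend on the grading, this proves that $\mathcal{V}(S)$ is locally finite as a variety of graded Lie algebras.

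The only delicate step is the initial passage from the graded identity $Sem_{1}(y_{1}+z_{1}, y_{2}+z_{2})$ to the ordinary identity $Sem_{1}(x_{1}, x_{2})$; this is a routine consequence of the existence of a homogeneous decomposition for every element, so it is not a real obstacle. The remaining identities in $S$ (namely $Sem_{2}(y_{1}+z_{1},y_{2}+z_{2})$, $[y_{1},y_{2}]$, and $[z_{1},y_{1}^{q}] - [z_{1},y_{1}]$) are not needed here: local finiteness already follows from $Sem_{1}$ alone.
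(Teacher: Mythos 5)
Your argument is correct and is essentially the paper's own proof: pass from the graded identity $Sem_{1}(y_{1}+z_{1},y_{2}+z_{2})$ to the ordinary identity $Sem_{1}(x_{1},x_{2})=x_{1}f(ad\,x_{2})$ via the homogeneous decomposition of elements, then invoke Theorem \ref{chan} to conclude that finitely generated members of $\mathcal{V}(S)$ are finite. The extra details you supply (homogeneous generators, independence of cardinality from the grading) only make explicit what the paper leaves implicit.
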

\begin{proof}
Let $L = L_{0} \oplus L_{1} \in \mathcal{V}(S)$ be a finitely
generated algebra. By definition of $S$, $Sem_{1}(y_{1} +
z_{1},y_{2} + z_{2}) \in Id_{G}(L)$. Hence, $Sem_{1}(x_{1},x_{2})
\in Id(L)$. So, by Theorem \ref{chan}, it follows that $L$ is a
finite Lie algebra.
\end{proof}

\begin{corollary}\label{ALie}
Let $L \in \mathcal{V}(S)$ be a finite-dimensional Lie algebra. Then
every nilpotent subalgebra of $L$ is abelian.
\end{corollary}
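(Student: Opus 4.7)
My plan is to pull back the graded identities in $S$ to ordinary polynomial identities on $L$, use $Sem_{1}$ to force every nilpotent subalgebra of $L$ to satisfy the $3$-Engel identity, and then use $Sem_{2}$ to collapse $3$-Engel to abelian.

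First, every $x \in L$ splits uniquely as $x = y + z$ with $y \in L_{0}$ and $z \in L_{1}$, so specialising $Sem_{1}(y_{1}+z_{1},y_{2}+z_{2})$ and $Sem_{2}(y_{1}+z_{1},y_{2}+z_{2})$ via $y_{i} = (x_{i})_{0}$, $z_{i} = (x_{i})_{1}$ shows that the ordinary polynomials $Sem_{1}(x_{1},x_{2})$ and $Sem_{2}(x_{1},x_{2})$ lie in $Id(L)$. Now fix a nilpotent subalgebra $N \subseteq L$ and $x \in N$; nilpotency of $N$ gives $[y, x^{M}] = 0$ for every $y \in N$ and all $M$ large. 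The identity $Sem_{1}$ reads $[y, x^{q^{2}+2}] = [y, x^{3}]$ for all $x, y \in L$; iterating yields $[y, x^{3}] = [y, x^{3+k(q^{2}-1)}]$ for every $k \geq 0$, and taking $k$ large forces $[y, x^{3}] = 0$ in $N$. Thus $N$ is $3$-Engel.

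Next I would evaluate $Sem_{2}(x_{1},x_{2})$ on $N$. Since $q = p^{n}$ with $p > 3$ forces $q \geq 5$, each of the exponents $q^{2}-1$, $q$, $q-2$ is at least $3$, while $[v, u^{3}] = 0$ for every $u, v \in N$ (including $u = [x_{1},x_{2}] \in N$). A term-by-term inspection of the six summands of $Sem_{2}$ now shows: the second, third and fourth vanish because they carry $[\cdot, x_{1}^{q^{2}-1}]$ or $[\cdot, x_{2}^{q}]$; the fifth vanishes since after the $(x_{1}^{q^{2}} - x_{1})$ step it reduces to $\pm[x_{1}, x_{2}, x_{1}] \in N$, which is then killed by $[\cdot, [x_{1},x_{2}]^{q-2}] = 0$; the sixth vanishes because $[x_{1}^{q^{2}} - x_{1}, x_{2}]$ collapses to $[x_{2}, x_{1}] \in N$ and $[x_{2}, [x_{2},x_{1}]^{q}] = 0$. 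Only the leading $[x_{1},x_{2}]$ survives, so $Sem_{2}(x_{1},x_{2}) = [x_{1},x_{2}]$ whenever $x_{1},x_{2} \in N$. Combined with $Sem_{2} \in Id(L)$ this forces $[x_{1},x_{2}] = 0$ in $N$, so $N$ is abelian.

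The main obstacle is the case-by-case reduction of $Sem_{2}$ in the $3$-Engel regime: the fifth and sixth summands mix composite expressions like $x_{1}^{q^{2}} - x_{1}$ with nested commutators raised to a $q$-th power, so one must carefully track that $q \geq 5$ together with $[v, u^{3}] = 0$ for every $u, v \in N$ makes every high-power adjoint operator annihilate the piece it acts on, leaving precisely $[x_{1},x_{2}]$.
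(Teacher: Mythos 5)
Your proof is correct, but it follows a genuinely different route from the paper. You first pull back $Sem_{1}$ to an ordinary identity and combine the operator relation $(ad\,x)^{q^{2}+2}=(ad\,x)^{3}$ with nilpotency of $N$ (which truncates high powers) to get the $3$-Engel condition $[v,u^{3}]=0$ on $N$, and then you collapse $Sem_{2}$ term by term: every summand other than $[x_{1},x_{2}]$ involves applying at least a third power of $ad\,u$ for some $u\in N$ (namely $x_{1}$, $x_{2}$, $[x_{1},x_{2}]$, or the element $[x_{1}^{q^{2}}-x_{1},x_{2}]$) to an element of $N$, so it dies, leaving $[x_{1},x_{2}]=0$; your exponent checks ($q\geq 5$, so $q-2\geq 3$ and $q^{2}-1\geq 3$) are what make this work. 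The paper never invokes $Sem_{1}$ here: it uses only $Sem_{2}$ together with induction on the nilpotency index --- if $M^{t}=\{0\}$ with $t\leq q+1$, every non-leading term of $Sem_{2}$ is a commutator of weight at least $q+1$ and vanishes outright, forcing $[x_{1},x_{2}]=0$; for larger index one passes to $M/Z(M)$ and descends. The paper's argument is shorter and needs only a weight count, while yours avoids the induction and makes the annihilation mechanism explicit, at the price of a more delicate term-by-term analysis (including the correct reading of the composite factors $x_{1}^{q^{2}}-x_{1}$ and $[x_{1},x_{2}]^{q-2}$ as adjoint operators), which you do handle correctly.
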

\begin{proof}
From the definition of $S$, it follows that $Sem_{2}(x_{1},x_{2})
\in Id(L)$. Let $M \neq \{0\}$ be a nilpotent (unnecessarily graded)
subalgebra of $L$. If $M^{t} = \{0\}$ for a positive integer $t \leq
q + 1$, it is clear that $M$ is abelian. If the index of nilpotency
is equal to $q + 2$, then $\frac{M}{Z(M)}$ is abelian. Consequently,
$M$ is abelian. Induction on the index of nilpotency will give the
desired result.
\end{proof}

It is well known that a verbal ideal (and, respectively, a graded
verbal ideal) over an infinite field is multi homogeneous. This fact
can be weakened, as stated in the next lemma.

\begin{lemma}
Let $I$ be a graded verbal ideal over a field of size $q$. If
$f(x_{1},\ldots,x_{n}) \in I$ and $0 \leq deg_{x_{1}}
f,\ldots,deg_{x_{n}} f < q$, then each multi homogeneous component
of $f$ belongs to $I$ as well.
\end{lemma}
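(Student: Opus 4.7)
The plan is to apply the Vandermonde trick in its finite-field version. Focus first on one variable, say $x_{1}$, and decompose $f = f_{0} + f_{1} + \cdots + f_{q-1}$, where $f_{d}$ is the component of $f$ that is homogeneous of degree $d$ in $x_{1}$; the hypothesis $\deg_{x_{1}} f < q$ guarantees that no other degrees can occur. For each $\lambda \in F$ the assignment $x_{1} \mapsto \lambda x_{1}$, $x_{j} \mapsto x_{j}$ ($j \neq 1$) extends to a graded endomorphism $\phi_{\lambda}$ of $L(X)$, since scalar multiplication preserves the $G$-grading (the case $\lambda = 0$ is permitted, sending $x_{1}$ to $0 \in L(X)$). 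Because $I$ is a graded verbal ideal,
\[
\phi_{\lambda}(f) \;=\; \sum_{d=0}^{q-1} \lambda^{d}\, f_{d} \;\in\; I \qquad \text{for every } \lambda \in F.
\]

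Next, I would enumerate the $q$ elements of $F$ as $\lambda_{0},\lambda_{1},\ldots,\lambda_{q-1}$ and view the resulting $q$ elements $\phi_{\lambda_{j}}(f) \in I$ as a linear system in the unknowns $f_{0},\ldots,f_{q-1}$. Its coefficient matrix is the Vandermonde matrix $V = (\lambda_{j}^{d})_{j,d=0}^{\,q-1}$ (with the convention $0^{0} = 1$), whose determinant $\prod_{j<k}(\lambda_{k} - \lambda_{j})$ is nonzero because the $\lambda_{j}$ are pairwise distinct. Hence $V$ is invertible over $F$, and solving the system expresses each $f_{d}$ as an explicit $F$-linear combination of the $\phi_{\lambda_{j}}(f)$; therefore $f_{d} \in I$ for every $d$. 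Iterating the same argument on each $f_{d}$ with respect to $x_{2}, x_{3}, \ldots, x_{n}$ (the partial-degree bound $<q$ is inherited when one passes to a homogeneous component) extracts every multi-homogeneous component of $f$ into $I$.

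The only delicate point is exactly the hypothesis $\deg_{x_{i}} f < q$. Without it, one could have more degrees appearing than scalars available in $F$; and because $\lambda^{q-1} = 1$ for every nonzero $\lambda$, exponents differing by $q-1$ would be indistinguishable under the substitutions $x_{1} \mapsto \lambda x_{1}$, causing the Vandermonde system to become non-square and the inversion step to fail. This is precisely what distinguishes the present (weakened) statement from the infinite-field case, where arbitrarily many distinct scalars are available and no bound on partial degrees is necessary.
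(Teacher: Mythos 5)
Your Vandermonde argument is correct: the substitutions $x_{1}\mapsto\lambda x_{1}$ are graded endomorphisms, so all $\sum_{d}\lambda^{d}f_{d}$ lie in $I$, and the invertibility of the $q\times q$ Vandermonde matrix over $F$ (which is exactly where the hypothesis $\deg_{x_{i}}f<q$ enters) recovers each component, after which iterating over the remaining variables finishes the proof. The paper states this lemma without proof, treating it as the standard weakening of the infinite-field multihomogeneity argument, and your proposal is precisely that standard argument, so there is nothing to compare beyond noting agreement.
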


\begin{lemma}\label{lema5}
If $L = span_{F}\{e_{11}, e_{12}\} \subset gl_{2}(F)$, then the
$\mathbb{Z}_{2}$-graded identities of $L$ follow from
\begin{center}
$[y_{1},y_{2}], [z_{1},z_{2}]$ and $[z_{1},y_{1}^{q}] =
[z_{1},y_{1}]$.
\end{center}
\end{lemma}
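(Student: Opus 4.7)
The plan is to prove the non-trivial inclusion $Id_G(L) \subseteq I$, where
\[
I = \langle [y_1, y_2],\ [z_1, z_2],\ [z_1, y_1^q] - [z_1, y_1] \rangle_T;
\]
the reverse inclusion is an immediate check, since $[e_{11}, e_{11}] = 0$, $[e_{12}, e_{12}] = 0$, and the third identity is handled exactly as in Lemma~\ref{lema8}.

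First I would establish a normal form for $L(X)/I$. Using Jacobi together with $[y_i, y_j] \equiv 0$ and $[z_i, z_j] \equiv 0$, an induction on commutator length shows that every iterated commutator involving at least two $Z$-variables is congruent to $0$ modulo $I$; the inductive step rests on the expansion
\[
[[u, y_j], z_k] = [[u, z_k], y_j] + [u, [y_j, z_k]],
\]
both of whose summands are brackets of two odd elements, hence in $I$ by graded endomorphisms of the generator $[z_1, z_2]$. The same Jacobi manipulation shows that within a single-$Z$ commutator the adjoint actions of the $y$'s commute, so we may sort them in increasing index, and substituting the odd element $[z_{i_0}, y_{j_1}^{a_1}, \ldots, y_{j_{m-1}}^{a_{m-1}}]$ for $z_1$ in the generator $[z_1, y_1^q] - [z_1, y_1]$ lets us reduce each exponent to the range $\{1, \ldots, q-1\}$. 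Together these steps yield the spanning set
\[
\{y_i\} \cup \{z_j\} \cup \{ [z_j, y_{i_1}^{a_1}, \ldots, y_{i_m}^{a_m}] : i_1 < \cdots < i_m,\ 1 \le a_l \le q-1 \}
\]
for $L(X)/I$ as an $F$-vector space.

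Next I would evaluate on $L$ by setting $y_i \mapsto \alpha_i e_{11}$ and $z_j \mapsto \beta_j e_{12}$; iterating $[e_{12}, e_{11}] = -e_{12}$ gives
\[
[z_j, y_{i_1}^{a_1}, \ldots, y_{i_m}^{a_m}] \longmapsto (-1)^{a_1 + \cdots + a_m}\, \beta_j\, \alpha_{i_1}^{a_1} \cdots \alpha_{i_m}^{a_m}\, e_{12},
\]
so distinct normal-form basis elements yield distinct nonzero monomials in the $\alpha_i, \beta_j$, each of degree at most $q-1$ in every $\alpha_i$ and at most $1$ in every $\beta_j$. Now, given any $f \in Id_G(L)$, reducing $f$ modulo $I$ produces a normal-form representative $g \in Id_G(L)$. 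Its evaluation on $L$ is a polynomial expression in the $\alpha_i, \beta_j$ that vanishes for every choice of substitution, and since each variable occurs to degree strictly less than $q = |F|$, the standard fact that a polynomial of degree less than $q$ in each variable which vanishes on all of $F^n$ is identically zero forces every coefficient of $g$ to be zero. Hence $g = 0$ and $f \in I$.

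The main technical obstacle will be the careful induction establishing that every commutator containing two or more $Z$-variables lies in $I$: it requires the Jacobi swap described above together with the prior normalization of the single-$Z$ part of the bracket, and must be organized so as not to introduce fresh unreduced commutators at each step.
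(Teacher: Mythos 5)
Your proposal is correct and follows essentially the same route as the paper: reduce modulo the verbal ideal to the normal form $[z_j,y_{i_1}^{a_1},\ldots,y_{i_m}^{a_m}]$ with all exponents below $q$ (commutators with two or more odd variables vanishing, even adjoints commuting), and then evaluate on $L = span_{F}\{e_{11},e_{12}\}$ to force the coefficients to vanish. The only cosmetic difference is that you kill all coefficients simultaneously via the fact that a polynomial of degree less than $q$ in each variable vanishing on all of $F^{n}$ is the zero polynomial, whereas the paper first passes to multihomogeneous components (using its preceding lemma on verbal ideals over a field of size $q$) and then uses a single evaluation per component.
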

\begin{proof}
It is clear that $L$ satisfies the identities $[y_{1},y_{2}],
[z_{1},z_{2}]$ and $[z_{1},y_{1}^{q}] = [z_{1},y_{1}]$. We will
prove that the reverse inclusion holds true. Let $f$ be a polynomial
identity of $L$. We may write
\begin{center}
$f = g + h$,
\end{center}
where $h \in \langle [y_{1},y_{2}], [z_{1},z_{2}], [z_{1},y_{1}^{q}]
= [z_{1},y_{1}] \rangle$ and $g(x_{1},\ldots,x_{n}) \in Id_{G}(L)$,
with $0 \leq deg_{x_{1}} g,\ldots,deg_{x_{n}} g < q$. In this way,
we may suppose that $g$ is a multi homogeneous polynomial. If
$g(y_{1}) = \alpha_{1}.y_{1}$ or $g(z_{1}) = \alpha_{2}z_{1}$ we can
easily see that $\alpha_{1} = \alpha_{2} = 0$. In the other case, we
may assume that
\begin{center}
$g(z_{1},y_{1},\ldots,y_{l}) =
\alpha_{3}.[z_{1},y_{1}^{a_{1}},\ldots,y_{l}^{a_{l}}], 1 \leq
a_{1},\ldots,a_{l} < q$.
\end{center}
However, $g(e_{12},e_{11},\ldots,e_{11})$ is a non zero multiple
scalar of $e_{12}$, and consequently, $\alpha_{3} = 0$. Hence, $f =
h$ and the proof is complete.
\end{proof}

\begin{lemma}\label{lema6}
Let $L = L_{0} \oplus L_{1} \in A^{2}\cap \mathcal{V}(S)$ be a
critical Lie $A$-algebra. Then $\newline L \in
var_{G}(span_{F}\{e_{11},e_{12}\})$.
\end{lemma}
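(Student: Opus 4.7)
The plan is to show that $[z_{1},z_{2}] \in Id_{G}(L)$; once this is established, together with $[y_{1},y_{2}]$ and $[z_{1},y_{1}^{q}] = [z_{1},y_{1}]$, which already belong to $Id_{G}(L)$ because they are in $S$, Lemma \ref{lema5} immediately gives $Id_{G}(span_{F}\{e_{11},e_{12}\}) \subseteq Id_{G}(L)$, i.e., $L \in var_{G}(span_{F}\{e_{11},e_{12}\})$. Thus only three ingredients of the hypothesis are needed: the metabelian identity $L \in A^{2}$, membership $L \in \mathcal{V}(S)$, and the Lie $A$-algebra property; monolithicity and criticality play no direct role in this step.

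The key computation is the following. Fix arbitrary $z_{1},z_{2} \in L_{1}$ and put $y := [z_{1},z_{2}] \in L_{0}$. The identity $[z_{1},y^{q}] = [z_{1},y]$ of $L$, applied with this particular even element $y$, rewrites as $[z_{1},[z_{1},z_{2}]] = [z_{1},[z_{1},z_{2}]^{q}]$. Since $L$ is metabelian, $[L,L]$ is abelian, and both $[z_{1},y]$ and $y$ lie in $[L,L]$; hence, in the left-normed expansion $[z_{1},y,y,\ldots,y]$ with $q \geq 2$ copies of $y$, the second bracket $[[z_{1},y],y]$ is already $0$, and the whole expression collapses to $0$. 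This gives $[z_{1},[z_{1},z_{2}]] = 0$, and by the symmetric argument $[z_{2},[z_{1},z_{2}]] = 0$.

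Let $M$ be the (not necessarily graded) subalgebra of $L$ generated by $z_{1}$ and $z_{2}$. The previous step combined with metabelianness shows that $M = Fz_{1} + Fz_{2} + F[z_{1},z_{2}]$ and $[M,[M,M]] = 0$, so $M$ is nilpotent of class at most two. Since $L$ is a Lie $A$-algebra, $M$ must be abelian, forcing $[z_{1},z_{2}] = 0$. As $z_{1},z_{2} \in L_{1}$ were arbitrary, $[z_{1},z_{2}] \in Id_{G}(L)$, and Lemma \ref{lema5} closes the argument.

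The main obstacle is the bracket collapse that produces $[z_{1},[z_{1},z_{2}]] = 0$; this is the only point where the identity $[z_{1},y^{q}]=[z_{1},y]$ has to be combined with the metabelian identity in a slightly non-formal way. Once this collapse is in hand, the Lie $A$-algebra hypothesis removes the last odd-odd bracket automatically, and the reduction to Lemma \ref{lema5} is routine.
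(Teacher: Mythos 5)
Your proof is correct, and it is worth noting where it tracks the paper and where it departs. The central computation is the same as the paper's: evaluating the graded identity $[z_{1},y_{1}^{q}]=[z_{1},y_{1}]$ at the even element $y=[z_{1},z_{2}]$ and using metabelianness to collapse $[z_{1},y^{q}]$ (since $[z_{1},y]$ and $y$ both lie in $[L,L]$, and $q\geq 2$) is exactly how the paper obtains $[L_{1},[L_{1},L_{1}]]=\{0\}$, and the reduction to Lemma \ref{lema5} is identical. You diverge in how $[z_{1},z_{2}]=0$ is then extracted: the paper treats the abelian case separately, uses criticality (hence monolithicity) together with Proposition \ref{tow} to get $Nil(L)=[L,L]$, and finally evaluates $Sem_{2}(y_{1}+z_{1},y_{2}+z_{2})$ on odd elements, where all higher commutator terms vanish once $[L_{1},L_{1},L_{1}]=0$, leaving $[z_{1},z_{2}]$; you instead observe that $[z_{1},[z_{1},z_{2}]]=[z_{2},[z_{1},z_{2}]]=0$ makes the subalgebra generated by $z_{1},z_{2}$ nilpotent of class at most two, hence abelian by the Lie $A$-algebra hypothesis, forcing $[z_{1},z_{2}]=0$. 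Your route is more economical: it dispenses with $Sem_{2}$, with criticality and monolithicity, and with the abelian/non-abelian case split, and it actually proves the stronger statement that every metabelian Lie $A$-algebra in $\mathcal{V}(S)$ (the $A$-property being available for the finite-dimensional ones by Corollary \ref{ALie}) lies in $var_{G}(span_{F}\{e_{11},e_{12}\})$, which is ample for the use made of the lemma in Corollary \ref{corolario1}. What the paper's argument buys instead is that the elimination of $[z_{1},z_{2}]$ is carried out purely from the identities in $S$ via $Sem_{2}$, rather than through a second appeal to the structural $A$-algebra property.
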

\begin{proof}
According to Lemma \ref{lema5}, it is sufficient to prove that $L$
satisfies the identity $[z_{1},z_{2}]$.

By assumption, $L$ is critical and therefore $L$ is monolithic. If
$L$ is abelian, then $dim L = 1$. In this case $L \cong
span_{F}\{e_{11}\}$ or $L \cong span_{F}\{e_{12}\}$.

In the sequel, we suppose that $L$ is non abelian . From Proposition
\ref{tow}, we have $[L,L] = Nil(L) = [L_{1},L_{1}] \oplus
[L_{0},L_{1}]$. From the identity $[z_{1},y_{1}] =
[z_{1},y_{1}^{q}]$, $\{0\} = [L_{1},[L_{1},L_{1}]] =
-[L_{1},L_{1},L_{1}]$. So, by the identity $Sem_{2}(y_{1} +
z_{1},y_{2} + z_{2})$, we have $[z_{1},z_{2}] \in
Id_{G}(span_{F}\{e_{11},e_{12}\})$, as required. The proof is
complete.
\end{proof}

\begin{corollary}\label{corolario1}
$A^{2}\cap var_{G}(sl_{2}(F)),A^{2}\cap \mathcal{V}(S) \mbox{and} \
\ var_{G}(span_{F}\{e_{11},e_{12}\})$ coincide.
\end{corollary}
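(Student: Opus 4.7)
The plan is to establish the cyclic chain of inclusions
$$A^{2}\cap var_{G}(sl_{2}(F)) \subset A^{2}\cap \mathcal{V}(S) \subset var_{G}(span_{F}\{e_{11},e_{12}\}) \subset A^{2}\cap var_{G}(sl_{2}(F)),$$
from which equality of all three varieties follows. The leftmost inclusion should be essentially automatic: by Lemma \ref{lema8} together with the example producing $Sem_{1},Sem_{2}\in Id(sl_{2}(F))$, every polynomial listed in $S$ is a graded identity of $sl_{2}(F)$, so $sl_{2}(F)\in\mathcal{V}(S)$ and hence $var_{G}(sl_{2}(F))\subset\mathcal{V}(S)$, giving the inclusion after intersecting with $A^{2}$.

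The middle inclusion carries the real content and is where I would concentrate the argument. By Corollary \ref{localmente} the variety $\mathcal{V}(S)$ is locally finite, and the same is then true of its subvariety $A^{2}\cap\mathcal{V}(S)$. Any locally finite variety of graded Lie algebras is generated by its critical members, so it suffices to show every critical $L\in A^{2}\cap\mathcal{V}(S)$ belongs to $var_{G}(span_{F}\{e_{11},e_{12}\})$. Corollary \ref{ALie} tells us that every finite-dimensional algebra in $\mathcal{V}(S)$ is a Lie $A$-algebra, so such an $L$ automatically satisfies the hypotheses of Lemma \ref{lema6}, which delivers the required conclusion $L\in var_{G}(span_{F}\{e_{11},e_{12}\})$.

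For the third (rightmost) inclusion I would simply realize $span_{F}\{e_{11},e_{12}\}$ as a graded subalgebra of $sl_{2}(F)$. The map $\phi$ defined on generators by $e_{11}\mapsto\tfrac{1}{2}(e_{11}-e_{22})$ and $e_{12}\mapsto e_{12}$ is grading-preserving and a Lie homomorphism (both $[\phi(e_{11}),\phi(e_{12})]$ and $\phi([e_{11},e_{12}])$ equal $e_{12}$), and it is clearly injective. Hence $span_{F}\{e_{11},e_{12}\}\in var_{G}(sl_{2}(F))$; being two-dimensional it is metabelian, so it lies in $A^{2}\cap var_{G}(sl_{2}(F))$, which yields the desired varietal inclusion.

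The potential stumbling block is the middle inclusion, since it is the only one requiring structural information rather than a direct verification; fortunately the ingredients (local finiteness from $Sem_{1}$, the Lie $A$-property from $Sem_{2}$, and the reduction of critical metabelian $A$-algebras in $\mathcal{V}(S)$ to subvarieties of $span_{F}\{e_{11},e_{12}\}$) have already been prepared, so the proof reduces to assembling Corollaries \ref{localmente} and \ref{ALie} with Lemma \ref{lema6} in the order above.
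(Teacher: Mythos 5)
Your proposal is correct and follows essentially the same route as the paper: the inclusion $A^{2}\cap var_{G}(sl_{2}(F))\subset A^{2}\cap\mathcal{V}(S)$, local finiteness from Corollary \ref{localmente}, reduction to critical algebras, and Lemma \ref{lema6} (with Corollary \ref{ALie} supplying the Lie $A$-algebra hypothesis) to land in $var_{G}(span_{F}\{e_{11},e_{12}\})\subset A^{2}\cap var_{G}(sl_{2}(F))$. You merely make explicit two steps the paper leaves unstated, namely that $sl_{2}(F)\in\mathcal{V}(S)$ and that $span_{F}\{e_{11},e_{12}\}$ embeds as a graded subalgebra of $sl_{2}(F)$, which is a harmless (and welcome) elaboration.
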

\begin{proof}
First, notice that $A^{2}\cap var_{G}(sl_{2}(F)) \subset A^{2}\cap
\mathcal{V}(S)$ which is a locally finite variety. By Lemma
\ref{lema6}, all critical algebras of $A^{2}\cap \mathcal{V}(S)$
belong to $var_{G}(span_{F}\{e_{11},e_{12}\}) \newline \subset
A^{2}\cap var_{G}(sl_{2}(F))$. Therefore, $A^{2}\cap \mathcal{V}(S)
\subset var_{G}(span_{F}\{e_{11},e_{12}\})$. Thus, we have
\begin{center}
$A^{2} \cap var_{G}(sl_{2}(F)) = A^{2} \cap \mathcal{V}(S) =
var_{G}(span_{F}\{e_{11},e_{12}\})$.
\end{center}
\end{proof}

\begin{lemma}\label{soluvel}
Let $L$ be a critical solvable Lie $A$-algebra belonging to
$\mathcal{V}(S)$. Then $L$ is metabelian.
\end{lemma}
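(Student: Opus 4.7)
My plan is to argue by contradiction: suppose $L$ has derived length $n+1 \ge 3$. By Theorem \ref{david} (Towers' decomposition), $L = A_n \oplus A_{n-1} \oplus \cdots \oplus A_0$ with each $A_i$ abelian, $A_n = L^{(n)}$, and $Z(L^{(i)}) = Nil(L) \cap A_i$. The first structural step is to confine $L^{(n)}$ to the odd part. Consider the graded ideal $M := L^{(n-1)}$: it is non-abelian (since $[M,M]=L^{(n)}\neq 0$), metabelian (since $[M,M,M,M] = L^{(n+1)}=0$), and is a Lie $A$-subalgebra of $L$ satisfying every identity in $S$, so $M \in A^2 \cap \mathcal{V}(S)$. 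By Corollary \ref{corolario1}, $A^2 \cap \mathcal{V}(S) = var_G(span_F\{e_{11}, e_{12}\})$, and by Lemma \ref{lema5} every algebra in that variety satisfies $[z_1, z_2]$. Hence $[M_1, M_1] = \{0\}$, and since $M$ is metabelian, $L^{(n)} = [M, M] = [M_0, M_1] \subseteq L_1$.

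Next I would use graded-monolithicity of the critical algebra $L$. Let $B$ be the graded monolith; by minimality $B \subseteq L^{(n)} \subseteq L_1$, so in particular $B$ is entirely odd. Since $[L_1, B] \subseteq [L_1, L_1] \cap L_1 \subseteq L_0 \cap L_1 = \{0\}$, the odd component $L_1$ acts trivially on $B$, so $B$ is in fact a minimal $L_0$-submodule. But $L_0$ is abelian and, by the identity $[z_1, y_1^q] = [z_1, y_1]$, its elements act on $L$ via operators satisfying $t^q - t = 0$; hence $L_0$ acts on $B$ simultaneously diagonalisably with eigenvalues in $F$. Therefore $B = Fb$ is one-dimensional.

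Finally, since $Fb$ is an $L$-ideal, the rule $[x, b] = \lambda(x) b$ defines a linear functional $\lambda: L \to F$. The Jacobi identity yields
\[
[[x, y], b] = [x, [y, b]] - [y, [x, b]] = \lambda(y) \lambda(x) b - \lambda(x) \lambda(y) b = 0,
\]
so $\lambda$ vanishes on $[L, L]$. Since $n \ge 2$ we have $L^{(n-1)} \subseteq [L, L]$, so $b \in Z(L^{(n-1)}) = Nil(L) \cap A_{n-1}$ by Theorem \ref{david}; but also $b \in L^{(n)} = A_n$, forcing $b \in A_n \cap A_{n-1} = \{0\}$ by the direct-sum decomposition and contradicting $B \neq 0$. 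The main obstacle is the identification $M \in A^2 \cap \mathcal{V}(S)$ so that Corollary \ref{corolario1} can be applied; once $L^{(n)} \subseteq L_1$ is in hand, the rest is a direct monolith-character computation.
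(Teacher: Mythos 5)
Your proof is correct, but it takes a genuinely different route from the paper's. The paper argues directly: criticality gives monolithicity, Proposition \ref{premet} gives $[L,L]\cap Z(L)=\{0\}$ and hence $Z(L)=\{0\}$; it then splits into the cases $(Nil(L))_{1}=L_{1}$ (immediately metabelian) and $(Nil(L))_{1}\varsubsetneq L_{1}$, where the identity $[z_{1},y_{1}^{q}]=[z_{1},y_{1}]$ forces $(Nil(L))_{0}=\{0\}$, hence $[L_{1},Nil(L)]=\{0\}$, so $C_{L}(Nil(L))\supset L_{1}\cup[L_{1},L_{1}]$, and a second application of Proposition \ref{premet} to $C_{L}(Nil(L))$ (whose centre is $Nil(L)$) kills $[C_{L}(Nil(L)),C_{L}(Nil(L))]$ and with it $L^{(2)}$. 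You instead argue by contradiction on the derived length, leaning explicitly on Towers' decomposition (Theorem \ref{david}) and on the already-proved Corollary \ref{corolario1} to force $L^{(n)}=[L^{(n-1)},L^{(n-1)}]\subset L_{1}$, and then run a monolith--character computation: the graded monolith $B$ is odd, is a minimal $L_{0}$-module because $[L_{1},B]=\{0\}$, is one-dimensional by simultaneous diagonalisation of $ad\,L_{0}$ (legitimate even though Drensky's Lemma \ref{diagonal} appears later in the paper, since it is an external fact independent of this lemma), and its character vanishes on $[L,L]$, so $b\in Z(L^{(n-1)})=Nil(L)\cap A_{n-1}$ while also $b\in L^{(n)}=A_{n}$, contradicting the directness of the sum (item 1 of Theorem \ref{david} with $K=L$). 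Your argument avoids Proposition \ref{premet} entirely at the cost of heavier use of Theorem \ref{david} and of Corollary \ref{corolario1}; it is non-circular (everything cited precedes, or is independent of, this lemma), and the structural by-product that the monolith of a non-metabelian candidate would be odd and one-dimensional is a pleasant insight of the same flavour the paper only exploits later, in Lemma \ref{semenov}. The paper's proof, by contrast, is shorter on machinery, needing neither the explicit subalgebras $A_{i}$ nor the identification of the metabelian part of the variety.
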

\begin{proof}
Let $L$ be a critical (non abelian) solvable Lie algebra that
belongs to $\mathcal{V}(S)$ with monolith $W$. By Proposition
\ref{premet}, we have $[L,L]\cap Z(L) = \{0\}$. Consequently, $Z(L)
= \{0\}$. Notice that $Z(C_{L}(Nil(L))) = Nil(L)$. If $(Nil(L))_{1}
= L_{1}$, then $L$ is metabelian. Now, we suppose that $(Nil(L))_{1}
\varsubsetneq L_{1}$. We assert that $(Nil(L))_{0} = \{0\}$.
Suppose, on the contrary, that there exists  $a \neq 0 \in
(Nil(L))_{0}$. Hence, there exists $b \in L_{1} - (Nil(L))_{1}$ such
that $[b,a] \neq 0$, because $Z(L) = \{0\}$. However, $[b,a] =
[b,a^{q}] = 0$. This is a contradiction. Thus, $[L_{1},Nil(L)] =
\{0\}$. Consequently $C_{L}(Nil(L)) \supset L_{1} \cup
[L_{1},L_{1}]$. By Proposition \ref{premet}
\begin{center}
$Z(C_{L}(Nil(L))) \cap [C_{L}(Nil(L)),C_{L}(Nil(L))] = \{0\}$.
\end{center}
Hence, $[C_{L}(Nil(L)),C_{L}(Nil(L))] = \{0\}$. So, $L^{(2)} =
\{0\}$ and the proof is complete.
\end{proof}

\begin{lemma}\label{corolario2}
Let $L$ be a critical non-solvable Lie $A$-algebra belonging to
$\mathcal{V}(S)$. Then $L$ is $G$-simple.
\end{lemma}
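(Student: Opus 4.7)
My plan is to show that the monolith $W$ of $L$ is all of $L$ and is $G$-simple. Since $L$ is critical, it is monolithic; I denote its unique minimal graded ideal by $W$. The argument proceeds in three steps: (i) $[L,W]=W$; (ii) $W$ is non-abelian (the main obstacle); (iii) the adjoint action embeds $L$ faithfully into $Der(W)\cong W$, forcing $L=W$.

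For (i), I would note that $[L,W]$ is a graded ideal of $L$ contained in $W$, so by minimality it equals $\{0\}$ or $W$. If $[L,W]=\{0\}$ then $W\subseteq Z(L)$; combining this with $[L,L]\cap Z(L)=\{0\}$ from Proposition \ref{premet} and with the fact that $[L,L]$ is a graded ideal, monolithicity forces $[L,L]=\{0\}$, so $L$ is abelian, contradicting non-solvability. Hence $[L,W]=W$ and in particular $W\subseteq[L,L]$.

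For (ii), the hard step, I would assume $W$ is abelian and work toward a contradiction. Then $W\subseteq Nil(L)$, which is itself an abelian graded ideal (Corollary \ref{ALie} together with the remark following Theorem \ref{david}). I would invoke the Levi decomposition $L = S +^{\cdot} Rad(L)$ from Proposition \ref{premet}; $S$ is non-zero because $L$ is non-solvable, and since $L$ satisfies the ordinary identities $Sem_{1}, Sem_{2}$ of $sl_{2}(F)$, each $F$-simple summand of $S$ should be $sl_{2}(F)$. The $S$-action on the abelian ideal $W$ should then split $W$ into irreducible $S$-submodules, producing either a proper graded $L$-subideal of $W$ (violating minimality) or a decomposition of $[L,L]$ as a sum of two strictly smaller graded ideals, contradicting Sheina's criterion, Theorem \ref{sheina}. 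The hard part will be making this complete-reducibility and descent argument work in the graded, positive-characteristic setting; Sheina's criterion is the most promising tool, since $[L,L]$ now visibly contains both the semisimple part coming from $S$ and the abelian piece $W$.

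For (iii), once $W$ is non-abelian, $[W,W]$ is a graded $L$-invariant subspace of $W$ by the Jacobi identity, so $[W,W]=W$ by minimality. The center $Z(W)$ is graded and $L$-invariant (direct check from its definition), hence a graded $L$-ideal contained in $W$, so $Z(W)=\{0\}$ by minimality. The Levi decomposition of $W$ itself (a Lie $A$-algebra), combined with $[W,W]=W$ and $Z(W)=\{0\}$, forces $Rad(W)=\{0\}$, so $W$ is semisimple, and by Proposition \ref{premet} is a direct sum of copies of $sl_{2}(F)$. Hence every derivation of $W$ is inner, $Der(W)=ad(W)\cong W$. Finally, $C_{L}(W)$ is a graded ideal with $C_{L}(W)\cap W=Z(W)=\{0\}$; by monolithicity $C_{L}(W)=\{0\}$, and the adjoint action then embeds $L$ into $Der(W)=W$, giving $L=W$. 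Since the unique minimal graded ideal of $L$ is $L$ itself, $L$ has no proper non-trivial graded ideals, i.e., $L$ is $G$-simple.
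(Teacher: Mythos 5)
Your step (ii) is the crux of the lemma, and it is exactly the step you have not proved: the assertion that the Levi subalgebra $S$ splits the abelian monolith $W$ into irreducible submodules is a Weyl-type complete reducibility statement that fails in general over a field of characteristic $p$, and the alternative horn of your dichotomy, ``contradicting Sheina's criterion,'' is not available either, because Theorem \ref{sheina} characterizes \emph{ordinary} criticality of monolithic Lie $A$-algebras, whereas your $L$ is only assumed critical as a \emph{graded} algebra (the paper records the implication ordinary critical $\Rightarrow$ graded critical, not the converse). So the case you admit is ``the hard part'' -- abelian monolith, i.e.\ $Rad(L)\neq\{0\}$ -- is left genuinely open. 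The paper closes precisely this case by a different and much shorter route, using the graded identities that your argument never touches: if $Rad(L)\neq\{0\}$, the monolith $W$ is abelian and sits inside $[L,L]\cap Rad(L)$ and inside $L^{(n)}$ (where $L^{(n)}=L^{(n+1)}\neq\{0\}$); then $Z(L)=\{0\}$ and $[L,W]=W$, and the identities $[y_{1},y_{2}]$ and $[z_{1},y_{1}]=[z_{1},y_{1}^{q}]$ force $W_{0}=\{0\}$ (for $a\in W_{0}$ and $b\in L_{1}$ one has $[b,a]=[b,a^{q}]\in[W,W]=\{0\}$, so $W_{0}\subseteq Z(L)$); from this one gets $W\subseteq Z(L^{(n)})\cap L^{(n)}$, which is $\{0\}$ by Proposition \ref{premet} applied to the $A$-subalgebra $L^{(n)}$ -- a contradiction. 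Hence $L$ is semisimple, and Propositions \ref{premet} and \ref{premet3} together with monolithicity give $G$-simplicity; no representation-theoretic splitting is needed.

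There is also a secondary gap in your step (iii): from $[W,W]=W$ and $Z(W)=\{0\}$ you conclude $Rad(W)=\{0\}$, but perfect and centerless does not imply semisimple in general (and in characteristic $p$ one cannot even freely assert that $Rad(W)$ is an ideal of $L$ or that $Rad(W)=W\cap Rad(L)$), so the inner-derivation embedding $L\hookrightarrow Der(W)\cong W$ rests on an unjustified reduction. Your step (i) is fine, but as it stands the proposal does not yield the lemma; the missing ingredient is the paper's use of the specific graded identities of $S$ to kill the even part of an abelian monolith.
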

\begin{proof}
Let $W$ be the monolith of $L$. We claim that $L$ is semisimple.
Suppose on the contrary that $Rad(L) \neq \{0\}$. Thus, $W \subset
Rad(L) \cap [L,L]$. The non trivial subspace $W$ is an abelian ideal
and it is contained in $L^{(n)}$, where $n$ is the least nonnegative
integer such that $L^{(n)} = L^{(n+1)}$. According to Proposition
\ref{premet}, $[L,L] \cap Z(L) = \{0\}$. So $Z(L) = \{0\}$ and
$[L,W] = W$. The identities $[y_{1},y_{2}]$ and $[z_{1},y_{1}] =
[z_{1},y_{1}^{q}]$ mean that the subspace $W_{0} = \{0\}$. Notice
that $[W,[L,L]] = \{0\} = [W,L^{(n)}]$ and $Z(L^{(n)}) \supset W$.
By Proposition \ref{premet}, $Z(L^{(n)}) \cap L^{(n)} = \{0\}$. This
is a contradiction, so $L$ is semisimple. By Propositions
\ref{premet} and \ref{premet3}, $L$ is a direct sum of $G$-simple
Lie algebras. Given that $L$ is monolithic, we conclude that $L$ is
$G$-simple.
\end{proof}

The next theorem was proved by Drensky in (\cite{Drensky} Lemma,
page 991).

\begin{lemma}\label{diagonal}
Let $V$ be a finite dimensional vector space over $F$ and let $A$ be
an abelian Lie algebra of the linear transformations $\phi: V
\rightarrow V$, where each has the equality
\begin{center}
$\phi^{q} = \phi$.
\end{center}
Then, every $\phi \in A$ is diagonalizable.
\end{lemma}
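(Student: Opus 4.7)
The plan is to reduce the statement to a standard linear-algebra criterion for diagonalizability via the factorization of $t^{q}-t$ over $F$.

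First I would observe that, by Fermat's little theorem applied to the multiplicative group $F^{\times}$ of order $q-1$, every element $\alpha\in F$ satisfies $\alpha^{q}=\alpha$. Since the polynomial $t^{q}-t\in F[t]$ has at most $q$ roots in $F$ and all $q$ elements of $F$ are roots, we obtain the factorization
\[
t^{q}-t=\prod_{\alpha\in F}(t-\alpha)
\]
into pairwise distinct linear factors over $F$.

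Next, for a fixed $\phi\in A$, the hypothesis $\phi^{q}=\phi$ asserts that $\phi$ is annihilated by $t^{q}-t$, so the minimal polynomial $m_{\phi}(t)$ of $\phi$ divides $t^{q}-t$. Consequently $m_{\phi}(t)$ itself splits as a product of pairwise distinct linear factors in $F[t]$. By the standard criterion, a linear operator on a finite-dimensional vector space over $F$ is diagonalizable over $F$ precisely when its minimal polynomial is a product of pairwise distinct linear factors of $F[t]$. Hence $\phi$ is diagonalizable, as desired.

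I would conclude by noting that the abelian hypothesis on $A$ is not actually used to obtain diagonalizability of each individual $\phi\in A$; its role is to ensure that the (pairwise commuting) diagonalizable operators in $A$ admit a common eigenbasis, i.e.\ that $A$ is \emph{simultaneously} diagonalizable, which is presumably the form in which the lemma will be used downstream. There is no real obstacle in the argument: it is a short appeal to the separability of $t^{q}-t$ over $F$ combined with the minimal-polynomial criterion for diagonalizability, and the only subtlety to flag is the (harmless) redundancy of the commutativity assumption for the statement as literally phrased.
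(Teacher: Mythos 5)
Your proof is correct. The paper gives no argument for this lemma---it is quoted as a result of Drensky---and your route (the minimal polynomial of $\phi$ divides $t^{q}-t=\prod_{\alpha\in F}(t-\alpha)$, which is squarefree and splits over $F$, hence $\phi$ is diagonalizable) is the standard proof of exactly this statement; your side remark is also accurate, since the commutativity of $A$ plays no role for diagonalizing a single $\phi$ and the paper in fact only applies the lemma to one operator $ad\,a_{0}$ at a time.
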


\begin{definition}
Let $L$ be a finite dimensional Lie algebra with a diagonalizable
operator $T: L \rightarrow L$. We denote by $V(T)$ a basis of $L$
formed by the eigenvectors of $T$. Moreover, we denote
$V(T)_{\lambda} = \{v \in V(T)| T(v) = \lambda.v\}$. We denote
$EV(w)$ the eigenvalue associated with the eigenvector $w \in V(T)$.
\end{definition}

Let $L \in \mathcal{V}(S)$ be a finite dimensional Lie algebra. It
is not difficult to see that $ad(L_{0}) = \{ad a: L \rightarrow L |
a \in L_{0}\}$ is an abelian subalgebra of linear transformations of
$L$. Moreover, $(ad a_{0})^{p} = ad a_{0}$ for all $a_{0} \in
L_{0}$. By Lemma \ref{diagonal}, we have the following.

\begin{corollary}\label{corolario6}
Let $L \in \mathcal{V}(S)$ be a finite dimensional Lie algebra. Let
$a_{0} \in L_{0}$. Then there exists $V(ad a_{0}) \subset L_{0} \cup
L_{1}$.
\end{corollary}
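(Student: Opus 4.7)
The plan is to exploit the fact that $ad\, a_{0}$ preserves the grading, and then combine this with the diagonalizability furnished by Lemma \ref{diagonal}. Concretely, since $a_{0} \in L_{0}$ and $[L_{g}, L_{h}] \subset L_{g+h}$, the operator $ad\, a_{0}$ restricts to linear maps $ad\, a_{0}\colon L_{0}\to L_{0}$ and $ad\, a_{0}\colon L_{1}\to L_{1}$. Thanks to the preceding observation that $(ad\, a_{0})^{q} = ad\, a_{0}$ on $L$, Lemma \ref{diagonal} guarantees a decomposition $L = \bigoplus_{\lambda} V_{\lambda}$ into eigenspaces of $ad\, a_{0}$.

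Next, I would show that each eigenspace $V_{\lambda}$ is itself graded. Take $v \in V_{\lambda}$ and write $v = v_{0} + v_{1}$ with $v_{i} \in L_{i}$. Since $ad\, a_{0}$ preserves the grading, $ad\, a_{0}(v_{i}) \in L_{i}$, and from $ad\, a_{0}(v_{0}) + ad\, a_{0}(v_{1}) = \lambda v_{0} + \lambda v_{1}$ the uniqueness of the homogeneous decomposition yields $ad\, a_{0}(v_{i}) = \lambda v_{i}$. Hence $v_{0}, v_{1} \in V_{\lambda}$, which gives
\begin{center}
$V_{\lambda} = (V_{\lambda}\cap L_{0}) \oplus (V_{\lambda}\cap L_{1})$.
\end{center}

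Finally, I would pick an arbitrary basis of each summand $V_{\lambda}\cap L_{0}$ and $V_{\lambda}\cap L_{1}$, and take the union over all $\lambda$. The result is a basis $V(ad\, a_{0})$ of $L$ consisting of eigenvectors of $ad\, a_{0}$, each of which lies entirely in $L_{0}$ or in $L_{1}$; that is, $V(ad\, a_{0}) \subset L_{0}\cup L_{1}$, as required.

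There is no real obstacle here: the whole point is that diagonalizability plus grading-preservation of $ad\, a_{0}$ forces the eigenspaces to split along the grading. The only thing one needs to be slightly careful about is invoking the hypothesis $L \in \mathcal{V}(S)$ to justify $(ad\, a_{0})^{q} = ad\, a_{0}$, which follows from the identity $[z_{1},y_{1}^{q}] = [z_{1},y_{1}]$ on odd vectors together with $[y_{1},y_{2}]=0$ on even vectors, so that Lemma \ref{diagonal} is applicable.
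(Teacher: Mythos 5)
Your proof is correct and follows essentially the same route as the paper: the paper simply observes that $ad(L_{0})$ is an abelian algebra of linear maps with $(ad\,a_{0})^{q}=ad\,a_{0}$ (coming from $[y_{1},y_{2}]$ on $L_{0}$ and $[z_{1},y_{1}^{q}]=[z_{1},y_{1}]$ on $L_{1}$) and invokes Lemma \ref{diagonal}. The only difference is that you spell out the step the paper leaves implicit, namely that $ad\,a_{0}$ preserves the grading so each eigenspace splits as $(V_{\lambda}\cap L_{0})\oplus(V_{\lambda}\cap L_{1})$, allowing a homogeneous eigenvector basis.
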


%%%%
%%%% E necessario esclarecer a Proposição abaixo
%%%%

\begin{proposition}\label{remark}
Let $L \in \mathcal{V}(S)$ be a finite dimensional $G$-simple
algebra. Let $a_{0} \in L_{0}$. Then there exists $V(ad a_{0})
\subset L_{0} \cup L_{1}$. Moreover, $V(ad a_{0})_{0} \cap L_{1} =
\emptyset$ for any basis $V(ad a_{0}) \subset L_{0} \cup L_{1}$.
\end{proposition}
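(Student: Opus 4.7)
The existence of an eigenbasis $V(ad\,a_{0}) \subset L_{0} \cup L_{1}$ is an immediate application of Corollary~\ref{corolario6} to $a_{0} \in L_{0}$, so nothing new is needed for that part.

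For the ``moreover'' assertion I would begin by pinning down the structure of $L$. Since $Rad(L)$ is a graded ideal by Proposition~\ref{premet3}, $G$-simplicity forces $Rad(L) = \{0\}$ (otherwise $Rad(L) = L$ would make $L$ solvable and then $[L,L]$ would be a proper non-zero graded ideal, contradicting $G$-simplicity). Thus $L$ is semisimple and, by Proposition~\ref{premet3} once more, decomposes as $L = M_{1} \oplus \cdots \oplus M_{r}$ with the $M_{i}$ simple $F$-ideals permuted transitively by the involution defining the $\mathbb{Z}_{2}$-grading. As $\mathbb{Z}_{2}$ has order two we must have $r \in \{1,2\}$, and the case $r = 2$ is excluded because the identity $[y_{1},y_{2}] \in Id_{G}(L)$ from Lemma~\ref{lema8} forces $L_{0}$ to be abelian, whereas the diagonal embedding $a \mapsto (a,\sigma(a))$ identifies $L_{0}$ with the simple non-abelian $M_{1}$. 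Hence $L$ is $F$-simple and, by Proposition~\ref{premet}, a form of $sl_{2}$ over $F$.

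Next I would invoke the identity $[z_{1},y_{1}^{q}] = [z_{1},y_{1}]$: it tells us that for every $y \in L_{0}$ the operator $(ad\,y)|_{L_{1}}$ satisfies $t^{q} = t$, so all its eigenvalues lie in $F$. This rules out the larger $F$-forms that Proposition~\ref{premet} allows (i.e.\ those whose splitting field is a proper extension of $F$), because in any such form one can exhibit a toral $y \in L_{0}$ whose $ad$ on $L_{1}$ has an eigenvalue outside $F$. Consequently $L \cong sl_{2}(F)$, and combining Proposition~\ref{Final} with the classification in Proposition~\ref{kah} identifies the grading as either trivial (in which case $L_{1} = \{0\}$ and the claim is vacuous) or the natural one, with $L_{0} = Fh$ and $L_{1} = Fe_{12} \oplus Fe_{21}$.

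In the natural grading write $a_{0} = \lambda h$. Whenever $\lambda \neq 0$, $ad\,a_{0}$ acts on $L_{1}$ with eigenvalues $\pm 2\lambda \neq 0$, so $\ker(ad\,a_{0}) \cap L_{1} = \{0\}$ and no basis eigenvector of $ad\,a_{0}$ with eigenvalue $0$ lies in $L_{1}$, yielding $V(ad\,a_{0})_{0} \cap L_{1} = \emptyset$ for every basis $V(ad\,a_{0}) \subset L_{0} \cup L_{1}$. The hardest step of the plan is the one ruling out the larger $F$-forms: one really needs to exploit $(ad\,y)^{q} = ad\,y$ on $L_{1}$ to pin the splitting field of the form down to $F$ itself. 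Everything else follows routinely from the preceding structural results.
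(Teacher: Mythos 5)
The first half of your proposal (existence of an eigenbasis via Corollary~\ref{corolario6}) matches the paper. For the ``moreover'' part, however, your route has a genuine gap: you reduce everything to the claim that $L \cong sl_{2}(F)$ with the natural grading, and the decisive step --- ``in any such form one can exhibit a toral $y \in L_{0}$ whose $ad$ on $L_{1}$ has an eigenvalue outside $F$'' --- is only asserted, as you yourself concede. Making this precise requires (i) knowing that an $F$-simple Lie $A$-algebra that splits into copies of $sl_{2}$ over an extension is of the form $sl_{2}(K)$ for a finite extension $K/F$ (a forms/Galois-descent statement the paper never quotes), and (ii) analysing which $\mathbb{Z}_{2}$-gradings such an algebra carries and showing that its even part must contain a suitable toral element; neither is routine, and this is essentially the same difficulty as the paper's Lemma~\ref{semenov}, which is proved \emph{after} and \emph{by means of} Proposition~\ref{remark}. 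There are also smaller unjustified steps: invoking Proposition~\ref{premet} requires first observing that $L$ is an $A$-algebra (Corollary~\ref{ALie}), and invoking Proposition~\ref{Final} requires $\dim L_{0} = 1$, which you have not established at that point (it needs the remark that $L_{0}$ is abelian and $sl_{2}(F)$ has no two-dimensional abelian subalgebra).

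The paper's own proof avoids classification entirely and is much lighter. Since $[y_{1},y_{2}] \in Id_{G}(L)$, the even part is abelian, so for eigenvectors $b_{1},b_{2} \in V(ad\,a_{0}) \cap L_{1}$ with $[b_{1},b_{2}] \neq 0$ the Jacobi identity forces $EV(b_{1}) = -EV(b_{2})$. Hence a hypothetical $0$-eigenvector $b \in V(ad\,a_{0})_{0} \cap L_{1}$ commutes with $a_{0}$, with every even eigenvector, and with every odd eigenvector of nonzero eigenvalue; an induction then gives $[a_{0},b_{1},\ldots,b_{n},b] = 0$ for all iterated brackets. Because $L$ is $G$-simple, $L = \langle a_{0} \rangle = span_{F}\{[a_{0},b_{1},\ldots,b_{n}]\}$ and $Z(L) = \{0\}$, so $b$ would be a nonzero central element, a contradiction. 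You should either adopt this direct centre argument or supply a complete proof of the form-elimination step; as written, the proposal does not close.
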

\begin{proof}

According to Corollary \ref{corolario6}, there exists $V(ad a_{0})
\subset L_{0} \cup L_{1}$.

Let $b_{1},b_{2} \in V(ad a_{0})\cap L_{1}$. Notice that if
$[b_{1},b_{2}] \neq 0$, then $EV(b_{1}) = - EV(b_{2})$.

It is clear that $\langle a_{0} \rangle$ is a graded ideal, and that
it is equal to $L$. Notice also that $L =
span_{F}\{[a_{0},b_{1},\ldots,b_{n}]| b_{1},\ldots, b_{n} \in V(ad
a_{0}), n \geq 1\}$.

If there was a non zero element $b \in V(ad a_{0})_{0} \cap L_{1}$,
we could easily check that $[a_{0},b_{1},b] = 0$ for any $b_{1} \in
V(ad (a_{0}))$. More generally, by an inductive argument and routine
calculations, we would have $[a_{0},b_{1},\ldots,b_{n},b] = 0$ for
any $n \geq 1$ and $b_{1},\ldots,b_{n} \in V(ad a_{0})$. However, an
element such as $b$ cannot be, because $Z(L) = \{0\}$. So, $V(ad
a_{0})_{0} \cap L_{1} = \emptyset$.
\end{proof}

%%%%% Bahturin %%%%%%%%%%%%%%%%%%%%%%%%%%%%%%%%%%%%%%%%%%%%%%%%%
%%%%%%%%%%%%%%%%%%%%%%%%%%%%%%%%%%%%%%%%%%%%%%%%%%%%%%%%%%%%%%%%
%%%%%%%%%%%%%%%%%%%%%%%%%%%%%%%%%%%%%%%%%%%%%%%%%%%%%%%%%%%%%%%%

\begin{lemma}\label{semenov}
Let $L \in \mathcal{V}(S)$ be a critical non solvable algebra, then
$L \cong sl_{2}(F)$.
\end{lemma}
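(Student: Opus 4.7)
The strategy is a two-step reduction: first show that $L$ is $F$-simple as an ordinary Lie algebra, then show that $\dim_F L = 3$, and finally invoke the classification of $3$-dimensional simple Lie algebras over $F$ (stated in Section 2) to conclude $L \cong sl_2(F)$.

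For the first step I would use Lemma \ref{corolario2}, which says that $L$ is $G$-simple, together with Propositions \ref{premet} and \ref{premet3}, to obtain an ordinary direct-sum decomposition $L = S_1 \oplus \cdots \oplus S_k$ into $F$-simple ideals. The grading is implemented by an involution $\sigma \in \mathrm{Aut}(L)$ that permutes the $S_i$. Since the graded identity $[y_1, y_2]$ forces $L_0$ to be abelian, a swap $\sigma(S_i) = S_j$ with $i \neq j$ would produce a contradiction: the diagonal map $s \mapsto s + \sigma(s)$ would embed the simple non-abelian algebra $S_i$ as a subalgebra of $L_0$. Hence $\sigma$ preserves each $S_i$ setwise, making every $S_i$ a graded ideal of $L$, and $G$-simplicity forces $k=1$; so $L$ is $F$-simple.

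For the second step I would first observe that $Sem_1, Sem_2 \in \mathrm{Id}(L)$ as ordinary identities (substitute $x_i = y_i + z_i$ in the graded identities in $S$), so $L$ lies in the ordinary variety $\mathrm{var}(sl_2(F))$. By the second clause of Proposition \ref{premet}, $L \otimes_F E \cong \bigoplus sl_2(E)$ for some finite extension $E/F$ and some $r \ge 1$. Evaluating $Sem_1$ on an element $\mu h \in sl_2(E)$ yields $\mu^{q^2-1} = 1$ for all $\mu \in E^*$, forcing $|E^*| \mid q^2 - 1$, and therefore $[E:F] \le 2$. The remaining cases $[E:F]=2$ and $r \ge 2$ are then excluded by combining $Sem_2$ with the graded identity $[z_1, y_1^q] = [z_1, y_1]$, which forces the nonzero eigenvalues of $\mathrm{ad}\,y_1$ on $L_1$ to lie in $F$; paired with the spectral description of Proposition \ref{remark} applied to a nonzero $a_0 \in L_0$, this pins down $\dim_F L = 3$, and the classification from Section 2 gives $L \cong sl_2(F)$.

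The hard part will be the final exclusion of $[E:F]=2$ and $r \ge 2$: $Sem_1$ only provides the bound $[E:F] \le 2$, and one has to exploit $Sem_2$ together with the graded $q$-linearity identity and the eigenspace analysis of Proposition \ref{remark} in concert to force the equality $\dim_F L = 3$.
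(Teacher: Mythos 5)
Your outline gets the easy parts right (the swap argument showing the grading automorphism preserves each simple summand, hence $G$-simplicity gives ordinary simplicity, is fine, and the $Sem_{1}$ evaluation bounding the centroid by $[E:F]\le 2$ is correct), but the decisive step of the lemma is exactly the one you leave unproved. Saying that $Sem_{2}$, the identity $[z_{1},y_{1}^{q}]=[z_{1},y_{1}]$ and Proposition \ref{remark} ``in concert pin down $\dim_{F}L=3$'' is an assertion, not an argument, and you yourself flag it as the hard part. Concretely, you must rule out $L\cong sl_{2}(F_{q^{2}})$ viewed over $F$ with an admissible grading (e.g.\ the grading $L_{0}=F_{q^{2}}h$, $L_{1}=F_{q^{2}}e\oplus F_{q^{2}}f$ has abelian even component, so only the identity $[z_{1},y_{1}^{q}]=[z_{1},y_{1}]$ can kill it), and, independently of the field extension issue, you must bound the dimensions of the root spaces of $ad\,a_{0}$ and of $L_{0}$ itself. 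The paper's proof does this by a mechanism entirely absent from your sketch: the graded identities make $ad\,a_{0}$ diagonalizable with eigenvalues in $F$; $G$-simplicity collapses the root decomposition to $\{-\lambda_{1},0,\lambda_{1}\}$; the metabelian subalgebra $L_{0}\oplus span_{F}\{V(ad\,a_{0})_{\lambda_{1}}\}$ is shown to be critical via Sheina's criterion (Theorem \ref{sheina}), its graded identities are computed as in Lemma \ref{lema5}, and then Proposition \ref{hanna-teo} (critical algebras generating the same variety have similar monoliths, compared against $span_{F}\{e_{11},e_{12}\}$) forces each root space, and hence $L_{0}$, to be one-dimensional. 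Without this (or some substitute of comparable strength) your proof stops exactly where the real work begins.

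A second, smaller gap: the conclusion you reach is only an isomorphism of ordinary Lie algebras. For the lemma to serve its purpose in the main theorem, $L$ must be isomorphic to $sl_{2}(F)$ \emph{as a graded algebra} with the natural grading; this is why the paper, after obtaining $\dim L=3$ and $\dim L_{0}=1$, still has to invoke Proposition \ref{Final} (which rests on Propositions \ref{bah}, \ref{kah} and Remark \ref{boboc}) to exclude the nonstandard $\mathbb{Z}_{2}$-gradings of $sl_{2}(F)$. Your proposal never addresses the identification of the grading at all.
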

\begin{proof}

First of all, notice that $dim L_{0} \geq 1$ and $dim L_{1} \geq 2$.
According to Lemma \ref{corolario2} $L$ is $G$-simple. Let $a_{0}
\in L_{0}$. By Proposition \ref{remark}, there exists $V(ad a_{0}) =
\{b_{1},\ldots,b_{n}\} \subset L_{0}\cup L_{1}$. Moreover, $V(ad
a_{0})_{0} \cap L_{1} = \emptyset$.

Let $-\lambda_{1} \leq \ldots \leq - \lambda_{m} < 0 < \lambda_{m}
\leq \ldots \leq \lambda_{1}$ be the eigenvalues associated with the
eigenvectors of $V(ad a_{0})$. Notice that
\begin{center}
$L_{0} = \sum\limits_{i=1}^{m} span_{F}\{[V(ad a_{0})_{\lambda_{i}},
V(ad a_{0})_{-\lambda_{i}}]\}$.
\end{center}
Without loss of generality, suppose that $span_{F}\{[V(ad
a_{0})_{\lambda_{1}},V(ad a_{0})_{-\lambda_{1}}]\} \neq \{0\}$. We
assert that $span_{F}\{[V(ad a_{0})_{\lambda_{1}},V(ad
a_{0})_{-\lambda_{1}}]\} \oplus span_{F}\{V(ad
a_{0})_{\lambda_{1}}\}$ is a subalgebra of $L$.

In fact, let $a \in V(ad a_{0})_{\lambda_{1}}$ and $b \in
span_{F}\{[V(ad a_{0})_{\lambda_{1}},V(ad a_{0})_{-\lambda_{1}}]\}$.
Consider $[a,b] = \sum\limits_{i=1}^{n} \alpha_{i}b_{i}$. So,
\begin{center}
$[a,b,a_{0}] = -\sum\limits_{i=1}^{n}\alpha_{i}.EV(b_{i})b_{i}$.
\end{center}
On the other hand,
\begin{center}
$[a,b,a_{0}] = -\lambda_{1}[a,b] =
-\lambda_{1}(\sum\limits_{i=1}^{n}\alpha_{i}b_{i})$.
\end{center}
Hence
\begin{center}
$(-EV(b_{j}).\alpha_{j} + \lambda_{1}.\alpha_{j})b_{j} = 0$.
\end{center}
Consequently, if $\alpha_{j} \neq 0$, then $\lambda_{1} =
EV(b_{j})$.

Similarly, the subspace $span_{F}\{[V(ad a_{0})_{\lambda_{1}},V(ad
a_{0})_{-\lambda_{1}}]\} \oplus span_{F}\{V(ad
a_{0})_{-\lambda_{1}}\}$ is a subalgebra. Notice that
\begin{center}
$span_{F}\{[V(ad a_{0})_{\lambda_{1}},V(ad
a_{0})_{-\lambda_{1}}]\}\oplus span_{F}\{V(ad a_{0})_{\lambda_{1}}\}
\oplus span_{F}\{V(ad a_{0})_{-\lambda_{1}}\}$
\end{center}
is a graded ideal of $L$.

Therefore, $L_{0} = span_{F}\{V(ad a_{0})_{0}\} = span_{F}\{[V(ad
a_{0})_{\lambda_{1}},V(ad a_{0})_{-\lambda_{1}}]\}$ and the subspace
$L_{1}$ is equal to $span_{F}\{V(ad a_{0})_{\lambda_{1}}\}\oplus
span_{F}\{V(ad a_{0})_{-\lambda_{1}}\}$.

%%%ESTOU COM ALGUMAS DUVIDAS NO PROXIMO PARAGRAFO

Notice that $span_{F}\{V(ad a_{0})_{\lambda_{1}}\}$ is an
irreducible $L_{0}$-module, because $L$ is $G$-simple. Moreover, it
is not difficult to see that $L_{0} \oplus span_{F}\{V(ad
a_{0})_{\lambda_{1}}\}$ is a monolithic metabelian Lie algebra with
monolith $span_{F}\{V(ad a_{0})_{\lambda_{1}}\}$ when viewed as
ordinary Lie algebra. Notice that
\begin{center}
$[L_{0} \oplus span_{F}\{V(ad a_{0})_{\lambda_{1}}\}, L_{0} \oplus
span_{F}\{V(ad a_{0})_{\lambda_{1}}\}] = span_{F}\{V(ad
a_{0})_{\lambda_{1}}\}$
\end{center}
cannot be represented by the sum of two ideals strictly contained
within it. By Theorem \ref{sheina}, $L_{0} \oplus span_{F}\{V(ad
a_{0})_{\lambda_{1}}\}$ is critical when viewed as an ordinary Lie
algebra. Thus, it is critical when viewed as a graded algebra as
well.

Following the arguments of Lemma \ref{lema5}, we can prove that
\begin{center}
$Id_{G}(L_{0} \oplus span_{F}\{V(ad a_{0})_{\lambda_{1}}\}) =
\langle [y_{1},y_{2}], [z_{1},y_{1}] = [z_{1},y_{1}^{q}],
[z_{1},z_{2}]\rangle_{T}$.
\end{center}
Consequently, it follows from Proposition \ref{hanna-teo} that
$span_{F}\{V(ad a_{0})_{\lambda_{1}}\}$ is a one-dimensional vector
space. Analogously, we have $dim (span_{F}\{V(ad
a_{0})_{-\lambda_{1}}\}) = 1$. Therefore, $L_{0} \oplus
span_{F}\{V(ad a_{0})_{\lambda_{1}}\} \oplus span_{F}\{V(ad
a_{0})_{-\lambda_{1}}\}$ is a three-dimensional $G$-simple Lie
algebra. So, $L$ is simple and isomorphic to $sl_{2}(F)$ (as
ordinary Lie algebras). Hence, by Proposition \ref{Final}, $L \cong
sl_{2}(F)$ (as graded Lie algebras), where $sl_{2}(F)$ is naturally
graded by $\mathbb{Z}_{2}$. The proof is complete.
\end{proof}

\section{Main theorem}

We now prove the main theorem of this paper.

\begin{theorem}
Let $F$ be a field of $char(F) > 3$ and size $|F| = q$. The
$\mathbb{Z}_{2}$-graded identities of $sl_{2}(F)$ follow from
\begin{center}
$[y_{1},y_{2}], Sem_{1}(y_{1}+z_{1},y_{2}+z_{2}),
Sem_{2}(y_{1}+z_{1},y_{2}+z_{2}), \mbox{and} \ [z_{1},y_{1}] =
[z_{1},y_{1}^{q}]$.
\end{center}
\end{theorem}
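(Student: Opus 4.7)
The plan is to prove that $\langle S \rangle_T = Id_G(sl_2(F))$, where $S$ is the set of polynomials from the theorem statement. The inclusion $\langle S\rangle_T \subset Id_G(sl_2(F))$ is immediate: the first and last polynomials are identities by Lemma \ref{lema8}, and the two Semenov polynomials are ordinary identities of $sl_2(F)$, hence remain identities when the generic variables are replaced by the sum of a homogeneous even and a homogeneous odd variable. So the heart of the matter is the reverse inclusion, which is equivalent to showing $\mathcal{V}(S) \subset var_G(sl_2(F))$.

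For this, I would exploit the machinery built in the preceding sections. By Corollary \ref{localmente}, the variety $\mathcal{V}(S)$ is locally finite, so it is generated by its critical graded Lie algebras. Hence it suffices to prove that every finite-dimensional critical $L \in \mathcal{V}(S)$ lies in $var_G(sl_2(F))$. By Corollary \ref{ALie}, any such $L$ is a Lie $A$-algebra, which allows us to split the analysis according to whether $L$ is solvable or not.

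In the solvable case, Lemma \ref{soluvel} gives that $L$ is metabelian, so $L \in A^2 \cap \mathcal{V}(S)$. By Corollary \ref{corolario1}, this intersection coincides with $var_G(span_F\{e_{11},e_{12}\})$, which is contained in $var_G(sl_2(F))$ since $span_F\{e_{11},e_{12}\}$ is isomorphic (as a graded Lie algebra) to the graded subalgebra $span_F\{e_{11}-e_{22},\,e_{12}\}$ of $sl_2(F)$ via $e_{11}\mapsto \tfrac{1}{2}(e_{11}-e_{22})$, $e_{12}\mapsto e_{12}$. In the non-solvable case, Lemma \ref{semenov} gives directly $L \cong sl_2(F)$ as graded Lie algebras, so $L \in var_G(sl_2(F))$ trivially.

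Combining both cases, every critical algebra of $\mathcal{V}(S)$ belongs to $var_G(sl_2(F))$, and local finiteness promotes this to $\mathcal{V}(S) \subset var_G(sl_2(F))$, finishing the proof. The conceptual obstacle is really not in this final assembly, which is short, but in the series of preparatory results (Lemmas \ref{soluvel}, \ref{semenov}, and Proposition \ref{Final}): once those are in place, the theorem follows by routine variety-theoretic bookkeeping. The only point that merits a careful sentence is the verification that $span_F\{e_{11},e_{12}\}$ really does sit inside $var_G(sl_2(F))$, since these are distinct Lie algebras and the claim relies on the explicit graded embedding indicated above.
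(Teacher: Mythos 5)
Your proposal is correct and follows essentially the same route as the paper: the easy inclusion from Lemma \ref{lema8} and the Semenov identities, then reduction (via local finiteness, Corollary \ref{localmente}) to critical algebras, with the solvable case handled by Lemma \ref{soluvel} and Corollary \ref{corolario1} and the non-solvable case by Lemma \ref{semenov}. Your explicit check that $span_F\{e_{11},e_{12}\}$ embeds as a graded subalgebra of $sl_2(F)$ is a nice touch the paper leaves implicit in Corollary \ref{corolario1}, but it is not a departure from the paper's argument.
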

\begin{proof}
It is clear that $var_{G}(sl_{2}(F)) \subset \mathcal{V}(S)$. To
prove that the reverse inclusion holds, it is sufficient to prove
that all critical algebras of $\mathcal{V}(S)$ are also critical
algebras of $var_{G}(sl_{2}(F))$. According to Corollary
\ref{corolario1}, $A^{2}\cap \mathcal{V}(S) = A^{2}\cap
var_{G}(sl_{2}(F))$. By Lemma \ref{soluvel}, any critical solvable
Lie algebra of $\mathcal{V}(S)$ is metabelian. By Lemma
\ref{semenov}, any critical non solvable Lie algebra of
$\mathcal{V}(S)$ is isomorphic to $sl_{2}(F)$. Therefore,
$\mathcal{V}(S) \subset var_{G}(sl_{2}(F))$, and the theorem is
proved.
\end{proof}

\section{Acknowledgments}

The author thanks CNPq for his Ph.D. scholarship. Moreover, the
author thanks the reviewer for his/her comments.


\begin{thebibliography}{}
\footnotesize{

\bibitem{Bahturin} Yu. A. Bahturin. Identical Relations in Lie
Algebras. VNU Science Press BV (1987).

\bibitem{Bahturin2} Yu. A. Bahturin, M. Kochetov and S. Montgomery.
\textit{Group gradings on simple Lie algebras in positive
characteristic.} Proceedings of The American Mathematical Society
137, 1245-1254 (2009).

%%\bibitem{Chanyshev} A.D. Chanyshev. \textit{On the local finite-dimensionality of algebraic
%%Lie algebras.} Communications of the Moscow Mathematical Society 45,
%%143-144 (1990).

\bibitem{Drensky} V.S. Drensky. \textit{Identities in Matrix Lie
algebras.} Trudy seminara imeni I.G. Petrovskogo 6, 987-994 (1981).

\bibitem{Giambruno} A. Giambruno and M.S. Souza.
\textit{Graded polynomial identities and Specht property of the Lie
algebra sl2.} Journal of Algebra 389, 6-22 (2013).

\bibitem{Giambruno2} A. Giambruno and M.S. Souza. \textit{Minimal varieties of graded Lie algebras
of exponential growth and the Lie algebra $sl_{2}$.} Journal of Pure
and Applied Algebra 218 (8), 1517-1527 (2014).

%%\bibitem{Felipe} P. Gille and T. Szamuely. Central Simple Algebras
%%and Galois Cohomology. Cambridge Studies in Advanced Mathematics
%%(2006).

\bibitem{Humphreys} J.E. Humphreys. Introduction to Lie algebras and Representation
Theory. Springer-Verlag. Third Edition (1972).

%%\bibitem{Jacobson} N. Jacobson. Lie algebras. Dover Publications
%%(1962).

\bibitem{Khazal} P. Khazal, C. Boboc and S. D\v{a}sc\v{a}lescu.
\textit{Group gradings of $M_{2}(K)$.} Bulletin of Australian
Mathematical Society 68 , 285-293 (2003).

\bibitem{Plamen1} P.E. Koshlukov. \textit{Graded polynomial identities for the
Lie algebra $sl_{2}(K)$.} International Journal of Algebra and
Computation 18 (5), 825-836 (2008).

%%\bibitem{Plamen3} A.D.M. Mortari and P.E. Koshlukov. \textit{$G$-identities for the Lie algebra
%%$sl_{2}(\mathbb{C})$.} Journal of Pure and Applied Algebra 219 (6),
%%2381-2395 (2015).

\bibitem{Hanna} H. Neumann. Varieties of Groups. Springer-Verlag
Berlin (1967).

\bibitem{Semenov2} A.A. Premet and K.N. Semenov.
\textit{Varieties of residually finite Lie algebras.} Mathematics of
the USSR-Sbornik 137 (1), 103-113 (1988).

\bibitem{Zaicev} D. Pagon, D. Repov\v{s} and M. Zaicev. \textit{Group gradings on finite dimensional Lie
algebras.} Algebra Colloquim 20 (4), 573-578 (2013).

\bibitem{Razmyslov} Yu. P. Razmyslov. \textit{Finite basing of the identites of a matrix algebra of second order over
a field of characteristic zero.} Algebra and Logic 12, 43-63 (1973).

\bibitem{Semenov} K.N. Semenov. \textit{Basis of identities of the algebra
$sl_{2}(K)$ over a finite field.} Matematicheskie Zametski 52,
114-119 (1992).

\bibitem{Sheina} G.V. Sheina. \textit{Metabelian Varieties of Lie A-algebras.}
Russian Math Surveys 33, 249-250 (1978).

\bibitem{Towers} D.A. Towers. \textit{Solvable Lie A-algebras.}
Journal of Algebra 340 (1), 1-12 (2011).

\bibitem{Vasilovsky} S. Yu. Vasilovsky. \textit{Basis of identities
of a three-dimensional simple Lie algebra over an infinite field.}
Algebra and Logic 28 (5), 355-368 (1990).}

\end{thebibliography}
\end{document}